\documentclass{article}%
\usepackage{amssymb}
\usepackage{graphicx}
\usepackage{amsmath}%
\setcounter{MaxMatrixCols}{30}%
\usepackage{amsfonts}
\providecommand{\U}[1]{\protect\rule{.1in}{.1in}}
\newtheorem{theorem}{Theorem}

\newtheorem{definition}[theorem]{Definition}

\newtheorem{lemma}[theorem]{Lemma}
\newtheorem{notation}[theorem]{Notation}

\newtheorem{proposition}[theorem]{Proposition}
\newtheorem{remark}[theorem]{Remark}

\newenvironment{proof}[1][Proof]{\textbf{#1.} }{\ \rule{0.5em}{0.5em}}
\begin{document}

\title{Axiomatic Differential Geometry II-2\\-Its Developments-\\Chapter 2 ;Differential Forms}
\author{Hirokazu Nishimura\\Institute of Mathematics\\University of Tsukuba\\Tsukuba, Ibaraki, 305-8571, JAPAN}
\maketitle

\begin{abstract}
We refurbish our axiomatics of differential geometry introduced in
[Mathematics for Applications,, 1 (2012), 171-182]. Then the notion of
Euclideaness can naturally be formulated. The principal objective in this
paper is to present an adaptation of our theory of differential forms
developed in [International Journal of Pure and Applied Mathematics, 64
(2010), 85-102] to our present axiomatic framework.

\end{abstract}

\section{Introduction}

The principal objective in this paper is to replicate our treatment of
differential forms in \cite{nishi2} in the context of our axiomatics on
differential geometry in \cite{nishi3}. Trying to achieve this goal, we have
realized that our axiomatics there is somewhat fragile. Therefore, we were
forced to refurbish the axiomatics. The main improvement is that prolongations
of spaces with respect to Weil algebras can directly be generalized to those
with respect to finitely presented algebras. As is well known, the
prolongation of a space with respect to the Weil algebra $k\left[  X\right]
/\left(  X^{2}\right)  $\ (the Weil algebra corresponding to first-order
infinitesimals) is its tangent bundle. Similarly, the prolongation of a space
with respect to the polynomial algebra $k\left[  X_{1},...,X_{n}\right]  $,
which is not a Weil algebra but surely a finitely presented algebra,\ is
simply the exponentiation of the space by $\mathbb{R}^{n}$. Thus the secondary
objective in this paper is to improve our axiomatics, to which Section
\ref{s2} is devoted. In particular, the theorem established in \cite{nishi4}
that the tangent space is a module over $k$, which is external to the category
$\mathcal{K}$, is enhanced to the theorem that the tangent space is a module
over $\mathbb{R}$, which is an object in $\mathcal{K}$.

Section \ref{s3} is concerned with Euclidean modules. Our new axiomatics of
differential geometry enables us to formulate the notion of Euclideaness
properly, in which cartesian closedness and prolongations with respect to
polynomial algebras will play a crucial role. In orthodox differential
geometry and its extensions to infinite-dimensional differential geometry, we
first study the category of linear spaces of some kind and smooth mappings,
say, the category of Hilbert spaces, that of Banach spaces, that of
Fr\'{e}chet spaces, that of convenient vector spaces and so on. We then study
the category of manifolds, which are modeled locally after such linear spaces.
Our approach moves in the sheer opposite direction. We first establish the
general theory of microlinear spaces. The theory of Eucliean modules (i.e.,
its linear part) is obtained as a special case of this general theory.

Sections \ref{s4} and \ref{s5} are merely an adaptation of our treatment of
differential forms in \cite{nishi4} to our present axiomatic framework.
Section \ref{s4} is devoted to a unique characterization of differential
forms, which could be called \textit{the fundamental theorem on differential
forms}. The characterization and existence of exterior differentiation, which
will be discussed in Section \ref{s5}, is an easy consequence of this
fundamental theorem.

\section{\label{s2}Refurbishing our Axiomatics}

\subsection{\label{s2.1}The Refurbishment}

Let $k$\ be a commutative ring. We denote by $\mathbb{T}_{k}$ the algebraic
theory of $k$-algebras in the sense of Lawvere. We denote by $\mathbf{FP}%
\mathbb{T}_{k}$ the category of finitely presented $k$-algebras. It is well
known that Weil algebras over $k$\ are finitely presented $k$-algebras. We
denote by $\mathbf{Weil}_{k}$\ the category of Weil $k$-algebras, which is
well known to be left exact. In particular, its terminal object is
$k$\ itself. A finitely presented $k$-algebra $A$\ is called \textit{pointed
}if it has a unique maximal ideal $\mathfrak{m}$\ such that the composition of
the canonical morphism
\[
k\rightarrow A
\]
and the canonical projection
\[
A\rightarrow A/\mathfrak{m}%
\]
is an isomorphism. We denote by $\mathbf{PFP}\mathbb{T}_{k}$ the category of
pointed finitely presented $k$-algebras. Not only Weil $k$-algebras but also
polynomial $k$-algebras $k[X_{1},...,X_{n}]$\ lie in $\mathbf{PFP}%
\mathbb{T}_{k}$. Given a left exact category $\mathcal{K}$\ and a $k$-algebra
object $\mathbb{R}$\ in $\mathcal{K}$, there is a canonical functor
$\mathbb{R}\underline{\otimes}\cdot$\ (denoted by $\mathbb{R\otimes\cdot}$\ in
\cite{kock}) from the category $\mathbf{Weil}_{k}$ to the category of
$k$-algebra objects and their homomorphisms in $\mathcal{K}$.

\begin{definition}
\label{d2.1.1}(\underline{DG-category}) The present refinement of our original
axiomatics in \cite{nishi3}\ is that we allow not only Weil prolongations but
also \textit{finitely presented} prolongations. Therefore, given a finitely
presented $k$-algebra $A$, we are endowed with a left exact functor
$\mathbf{T}^{A}:\mathcal{K}\rightarrow\mathcal{K}$ preserving cartesian closed
structures in the sense that we have
\begin{equation}
\mathbf{T}^{A}\left(  X^{Y}\right)  =\left(  \mathbf{T}^{A}X\right)
^{Y}\label{2.1.0}%
\end{equation}
for any objects $X$\ and $Y$\ in $\mathcal{K}$. For any freely generated
$k$-algebra $A=k[X_{1},...,X_{n}]$ over $n$ generaters $X_{1},...,X_{n}$,
$\mathbf{T}^{A}=\mathbf{T}^{k[X_{1},...,X_{n}]}$ is required to be simply the
exponentiation by $\mathbb{R}^{n}$, so that we have
\begin{equation}
\mathbf{T}^{k[X_{1},...,X_{n}]}X=X^{\mathbb{R}^{n}}\label{2.1.1}%
\end{equation}
for any object $X$\ in $\mathcal{K}$. In particular, when $n=0$, we have
\[
\mathbf{T}^{k}X=X
\]
Given a finitely presented $k$-algebra $A$, it is required that
\begin{equation}
\mathbf{T}^{A}\mathbb{R=R}\underline{\mathbb{\otimes}}A\label{2.1.2}%
\end{equation}
Given two finitely presented $k$-algebra $A$ and $B$, it is required that
\begin{equation}
\mathbf{T}^{B}\circ\mathbf{T}^{A}=\mathbf{T}^{A\otimes_{k}B}\label{2.1.3}%
\end{equation}
Given a morphism $\varphi:A\rightarrow B$ in $\mathbf{PFP}\mathbb{T}_{k}$, we
have a natural transformation
\[
\alpha_{\varphi}:\mathbf{T}^{A}\Rightarrow\mathbf{T}^{B}%
\]
which respects cartesian closed structures, so that we have
\begin{equation}
\alpha_{\varphi}\left(  X^{Y}\right)  =\left(  \alpha_{\varphi}\left(
X\right)  \right)  ^{Y}\label{2.1.4}%
\end{equation}
for any objects $X$\ and $Y$\ in $\mathcal{K}$. It is also required to
satisfy
\begin{align}
\alpha_{\varphi}\left(  \mathbf{T}^{C}X\right)   & =\mathbf{T}^{C}\left(
\alpha_{\varphi}\left(  X\right)  \right)  :\mathbf{T}^{A}\mathbf{T}%
^{C}X=\mathbf{T}^{C\otimes_{k}A}X=\mathbf{T}^{A\otimes_{k}C}X=\mathbf{T}%
^{C}\mathbf{T}^{A}X\nonumber\\
& \rightarrow\mathbf{T}^{C}\mathbf{T}^{B}X=\mathbf{T}^{B\otimes_{k}%
C}X=\mathbf{T}^{C\otimes_{k}B}X=\mathbf{T}^{B}\mathbf{T}^{C}X\label{2.1.8}%
\end{align}
for any object $C$\ in the category $\mathbf{PFP}\mathbb{T}_{k}$. Given two
morphisms $\varphi:A\rightarrow B$ and $\psi:B\rightarrow C$ in $\mathbf{PFP}%
\mathbb{T}_{k}$, it is required that
\begin{equation}
\alpha_{\psi}\circ\alpha_{\varphi}=\alpha_{\psi\circ\varphi}\label{2.1.5}%
\end{equation}
Given any identity morphism $\mathrm{id}_{A}:A\rightarrow A$ in $\mathbf{PFP}%
\mathbb{T}_{k}$, it is required that
\begin{equation}
\alpha_{\mathrm{id}_{A}}=\mathrm{id}_{\mathbf{T}^{A}}\label{2.1.6}%
\end{equation}
Given a morphism $\varphi:A\rightarrow B$ in $\mathbf{PFP}\mathbb{T}_{k}$, it
is required that
\begin{equation}
\alpha_{\varphi}\left(  \mathbb{R}\right)  =\mathbb{R\underline
{\mathbb{\otimes}}\varphi}\label{2.1.7}%
\end{equation}
Thus our new definition of a \underline{DG-category} is a quadruple
\[
\left(  \mathcal{K},\mathbb{R},\mathbf{T},\alpha\right)
\]
where

\begin{enumerate}
\item $\mathcal{K}$ is a category which is left exact and cartesian closed.

\item $\mathbb{R}$ is a commutative $k$-algebra object in $\mathcal{K}$.

\item Given an object $A$\ in $\mathbf{PFP}\mathbb{T}_{k}$, $\mathbf{T}%
^{A}:\mathcal{K}\rightarrow\mathcal{K}$\ is a left-exact and
cartesian-closed-structure-preserving functor.

\item Given a morphism $\varphi:A\rightarrow B$ in $\mathbf{PFP}\mathbb{T}%
_{k}$, $\alpha_{\varphi}:\mathbf{T}^{A}\Rightarrow\mathbf{T}^{B}$ is a natural transformation.

\item The quadruple $\left(  \mathcal{K},\mathbb{R},\mathbf{T},\alpha\right)
$\ is required to satisfy (\ref{2.1.1})-(\ref{2.1.7}) as axioms.
\end{enumerate}
\end{definition}

\begin{remark}
As in \cite{nishi3}\ we have a bifunctor
\[
\otimes:\mathcal{K}\times\mathbf{PFP}\mathbb{T}_{k}\rightarrow\mathcal{K}%
\]
with
\[
X\otimes A=\mathbf{T}^{A}X
\]
for any object $X$\ in $\mathcal{K}$\ and any object $A$\ in $\mathbf{PFP}%
\mathbb{T}_{k}$, and
\begin{align*}
& f\otimes\varphi\\
& =\alpha_{\varphi}\left(  Y\right)  \circ\mathbf{T}^{A}f\\
& =\mathbf{T}^{B}f\circ\alpha_{\varphi}(X)
\end{align*}
for any morphism
\[
f:X\rightarrow Y
\]
in $\mathcal{K}$\ and any morphism
\[
\varphi:A\rightarrow B
\]
in $\mathbf{PFP}\mathbb{T}_{k}$.
\end{remark}

\begin{remark}
Given an object $A$\ in $\mathbf{PFP}\mathbb{T}_{k}$ and an object $X$\ in
$\mathcal{K}$, we write
\[
\tau_{A}\left(  X\right)  :\mathbf{T}^{A}X\rightarrow X
\]
and
\[
\iota_{A}\left(  X\right)  :X\rightarrow\mathbf{T}^{A}X
\]
for
\[
\alpha_{A\rightarrow k}\left(  X\right)  :\mathbf{T}^{A}X\rightarrow
\mathbf{T}^{k}X=X
\]
and
\[
\alpha_{k\rightarrow A}\left(  X\right)  :X=\mathbf{T}^{k}X\rightarrow
\mathbf{T}^{A}X
\]
respectively, where $A\rightarrow k$\ and $k\rightarrow A$\ are the canonical
morphisms in $\mathbf{PFP}\mathbb{T}_{k}$.
\end{remark}

It is easy to see that

\begin{proposition}
\label{t2.1.1}Let $\left(  \mathcal{K},\mathbb{R},\mathbf{T},\alpha\right)  $
be a DG-category with the category $\mathcal{K}$\ being locally cartesian
closed and $M$ an object\ in $\mathcal{K}$. Then $\left(  \mathcal{K}%
/M,\mathbb{R}_{M},\mathbf{T}_{M},\alpha^{M}\right)  $ is a DG-category but for
conditions (\ref{2.1.0}) and (\ref{2.1.4}), where

\begin{enumerate}
\item $\mathcal{K}/M$ is the slice category.

\item $\mathbb{R}_{M}$ is the canonical projection
\[
\mathbb{R}\times M\rightarrow M
\]

\item Given an object
\[
\pi:E\rightarrow M
\]
in $\mathcal{K}$ and an object $A$\ in $\mathbf{PFP}\mathbb{T}_{k}$,
$\mathbf{T}_{M}^{A}\left(  \pi\right)  $ is defined to be
\begin{equation}
\overline{\mathbf{T}}_{M}^{A}\left(  \pi\right)  \rightarrow M\label{t2.1.1.1}%
\end{equation}
where $\overline{\mathbf{T}}_{M}^{A}\left(  \pi\right)  $\ is obtained as the
equalizer of
\[
\mathbf{T}^{A}\pi:\mathbf{T}^{A}E\rightarrow\mathbf{T}^{A}M
\]
and
\[
\mathbf{T}^{A}E\underrightarrow{\,\tau_{A}\left(  E\right)  }%
\,E\,\underrightarrow{\pi}\,M\,\underrightarrow{\iota_{A}\left(  M\right)
}\,\mathbf{T}^{A}M
\]
and (\ref{t2.1.1.1}) is
\[
\overline{\mathbf{T}}_{M}^{A}\left(  \pi\right)  \rightarrow\mathbf{T}%
^{A}E\underrightarrow{\,\tau_{A}\left(  E\right)  }\,E\,\underrightarrow{\pi
}\,M
\]

\item Let $\varphi:A\rightarrow B$\ be a morphism in $\mathbf{PFP}%
\mathbb{T}_{k}$. Since the diagrams
\[%
\begin{array}
[c]{ccc}%
\mathbf{T}^{A}E & \underrightarrow{\alpha_{\varphi}\left(  E\right)  } &
\mathbf{T}^{B}E\\
\mathbf{T}^{A}\pi\downarrow &  & \downarrow\mathbf{T}^{B}\pi\\
\mathbf{T}^{A}M & \overrightarrow{\alpha_{\varphi}\left(  M\right)  } &
\mathbf{T}^{B}M
\end{array}
\]
\[%
\begin{array}
[c]{ccc}%
\mathbf{T}^{A}E & \underrightarrow{\alpha_{\varphi}\left(  E\right)  } &
\mathbf{T}^{B}E\\
\tau_{A}\left(  E\right)  \searrow &  & \swarrow\tau_{B}\left(  E\right) \\
& E & \\
& \pi\downarrow & \\
& M & \\
\iota_{A}\left(  M\right)  \swarrow &  & \searrow\iota_{B}\left(  M\right) \\
\mathbf{T}^{A}M & \overrightarrow{\alpha_{\varphi}\left(  M\right)  } &
\mathbf{T}^{B}M
\end{array}
\]
commute, there is a unique morphism
\[
\alpha_{\varphi}^{M}\left(  \pi\right)  :\overline{\mathbf{T}}_{M}^{A}\left(
\pi\right)  \rightarrow\overline{\mathbf{T}}_{M}^{B}\left(  \pi\right)
\]
in $\mathcal{K}$ such that the diagram
\[%
\begin{array}
[c]{ccc}%
\overline{\mathbf{T}}_{M}^{A}\left(  \pi\right)  & \underrightarrow
{\alpha_{\varphi}^{M}\left(  \pi\right)  } & \overline{\mathbf{T}}_{M}%
^{B}\left(  \pi\right) \\
\downarrow &  & \downarrow\\
\mathbf{T}^{A}E & \overrightarrow{\alpha_{\varphi}\left(  E\right)  } &
\mathbf{T}^{B}E
\end{array}
\]
commutes.
\end{enumerate}
\end{proposition}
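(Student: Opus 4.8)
The plan is to treat items 1--4 of the statement as definitions and to verify, in order, that $\mathcal{K}/M$ is left exact and cartesian closed, that $\mathbb{R}_{M}$ is a commutative $k$-algebra object, that each $\mathbf{T}_{M}^{A}$ is a well-defined left-exact endofunctor of $\mathcal{K}/M$, that each $\alpha_{\varphi}^{M}$ is a natural transformation, and finally that the axioms $(\ref{2.1.1})$--$(\ref{2.1.3})$, $(\ref{2.1.8})$, $(\ref{2.1.5})$--$(\ref{2.1.7})$ all hold; conditions $(\ref{2.1.0})$ and $(\ref{2.1.4})$ are deliberately omitted because the cartesian-closed structure of a slice is given by dependent products, which $\mathbf{T}^{A}$ --- tailored to ordinary exponentials --- is not expected to preserve, so that item 3 of Definition \ref{d2.1.1} is here weakened to mere left-exactness. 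Two of these points are immediate: $\mathcal{K}/M$ is finitely complete because $\mathcal{K}$ is, and it is cartesian closed precisely because $\mathcal{K}$ is \emph{locally} cartesian closed; and $\mathbb{R}_{M}=\mathrm{pr}_{2}\colon\mathbb{R}\times M\rightarrow M$ inherits a commutative $k$-algebra structure by applying $(-)\times\mathrm{id}_{M}$ to the structure maps of $\mathbb{R}$, using that finite products in $\mathcal{K}/M$ of objects of the shape $X\times M\rightarrow M$ are again of that shape. In particular $\mathbb{R}_{M}\underline{\otimes}A$ is, fibrewise, the constant family $(\mathbb{R}\underline{\otimes}A)\times M\rightarrow M$, a description I shall use repeatedly.

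Next I would check that $\mathbf{T}_{M}^{A}$ is a well-defined left-exact functor. On objects it is well defined because $\mathbf{T}^{A}$ is left exact and $\mathcal{K}$ has equalizers. On a morphism $f\colon(E,\pi)\rightarrow(E^{\prime},\pi^{\prime})$ over $M$, the morphism $\mathbf{T}^{A}f$ carries the parallel pair defining $\overline{\mathbf{T}}_{M}^{A}(\pi)$ to the one defining $\overline{\mathbf{T}}_{M}^{A}(\pi^{\prime})$ --- this uses only naturality of $\tau_{A}$ and $\iota_{A}$ (which are instances of $\alpha$ at the canonical morphisms $A\rightarrow k$ and $k\rightarrow A$) together with $\pi^{\prime}\circ f=\pi$ --- so there is an induced morphism on equalizers, visibly over $M$; functoriality then follows from the uniqueness clause of the equalizer. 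For left exactness it suffices to show $\mathbf{T}_{M}^{A}$ preserves the terminal object and pullbacks. The first is a one-line computation: since $\tau_{A}(M)\circ\iota_{A}(M)=\mathrm{id}_{M}$ (as $(A\rightarrow k)\circ(k\rightarrow A)=\mathrm{id}_{k}$, by $(\ref{2.1.5})$ and $(\ref{2.1.6})$), the morphism $\iota_{A}(M)$ is the equalizer of $\mathrm{id}_{\mathbf{T}^{A}M}$ and $\iota_{A}(M)\circ\tau_{A}(M)$, whence $\mathbf{T}_{M}^{A}(\mathrm{id}_{M})=\mathrm{id}_{M}$; and for $A=k$, where $\tau_{k}$, $\iota_{k}$ and $\mathbf{T}^{k}$ are identities, $\mathbf{T}_{M}^{k}=\mathrm{Id}_{\mathcal{K}/M}$. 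For pullbacks, I would observe that $(E,\pi)\mapsto\overline{\mathbf{T}}_{M}^{A}(\pi)$ is the pointwise equalizer of two natural transformations $\mathbf{T}^{A}\circ U\rightrightarrows\Delta_{\mathbf{T}^{A}M}$ between pullback-preserving functors $\mathcal{K}/M\rightarrow\mathcal{K}$ (here $U$ is the forgetful functor, which creates connected limits); since finite limits commute with finite limits this functor preserves pullbacks, and since $U$ creates pullbacks in $\mathcal{K}/M$, so does $\mathbf{T}_{M}^{A}$.

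Once $\mathbf{T}_{M}^{A}$ is in place, $\alpha_{\varphi}^{M}(\pi)$ is by construction the unique morphism over $\alpha_{\varphi}(E)\colon\mathbf{T}^{A}E\rightarrow\mathbf{T}^{B}E$, so each remaining identity about the $\alpha^{M}$ reduces, via that uniqueness, to the corresponding identity for $\alpha$ in $\mathcal{K}$: naturality of $\alpha_{\varphi}^{M}$ in $\pi$ comes from naturality of $\alpha_{\varphi}$; $(\ref{2.1.5})$ and $(\ref{2.1.6})$ come from $(\ref{2.1.5})$, $(\ref{2.1.6})$ by pasting squares; $(\ref{2.1.8})$ comes from $(\ref{2.1.8})$ together with $(\ref{2.1.3})$. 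For axiom $(\ref{2.1.1})$ I would use that, under the identification $(\ref{2.1.1})$ in $\mathcal{K}$, the transformation attached to any morphism of polynomial algebras is precomposition with the associated polynomial map of affine spaces; in particular $\tau_{k[X_{1},\dots,X_{n}]}(E)$ is evaluation at $0\in\mathbb{R}^{n}$ and $\iota_{k[X_{1},\dots,X_{n}]}(M)$ is the constant-section inclusion, so the equalizer of item 3 becomes exactly the object of $\phi\colon\mathbb{R}^{n}\rightarrow E$ with $\pi\circ\phi$ constant, which is the standard presentation of the slice exponential $\pi^{\mathbb{R}_{M}^{n}}$ in a locally cartesian closed category, structure map included. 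Axioms $(\ref{2.1.2})$ and $(\ref{2.1.7})$ are the announced fibrewise computation: left-exactness of $\mathbf{T}^{A}$ gives $\mathbf{T}^{A}(\mathbb{R}\times M)=\mathbf{T}^{A}\mathbb{R}\times\mathbf{T}^{A}M$ with $\tau_{A}$, $\iota_{A}$, $\alpha_{\varphi}$ acting coordinatewise, so the equalizer of item 3 collapses to $\mathbf{T}^{A}\mathbb{R}\times M=(\mathbb{R}\underline{\otimes}A)\times M$ with its projection to $M$, i.e.\ to $\mathbb{R}_{M}\underline{\otimes}A$, and $\alpha_{\varphi}^{M}(\mathbb{R}_{M})$ restricts to $(\mathbb{R}\underline{\otimes}\varphi)\times\mathrm{id}_{M}=\mathbb{R}_{M}\underline{\otimes}\varphi$ by $(\ref{2.1.7})$ in $\mathcal{K}$; matching the $k$-algebra structures is the same coordinatewise bookkeeping.

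The one axiom that genuinely requires work, and which I expect to be the main obstacle, is $(\ref{2.1.3})$, namely $\mathbf{T}_{M}^{B}\circ\mathbf{T}_{M}^{A}=\mathbf{T}_{M}^{A\otimes_{k}B}$. Writing $Q_{A}\hookrightarrow\mathbf{T}^{A}E$ for the equalizer defining $\overline{\mathbf{T}}_{M}^{A}(\pi)$, left-exactness of $\mathbf{T}^{B}$ makes $\mathbf{T}^{B}Q_{A}\hookrightarrow\mathbf{T}^{B}\mathbf{T}^{A}E=\mathbf{T}^{A\otimes_{k}B}E$ again an equalizer, and $\overline{\mathbf{T}}_{M}^{B}(\overline{\mathbf{T}}_{M}^{A}(\pi))$ is then an equalizer inside $\mathbf{T}^{B}Q_{A}$. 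The task is to show that the conjunction of the two equalizer conditions --- the $\mathbf{T}^{B}$-image of the condition cutting out $Q_{A}$, and the $B$-level condition --- is equivalent to the single $(A\otimes_{k}B)$-level condition cutting out $\overline{\mathbf{T}}_{M}^{A\otimes_{k}B}(\pi)$ inside $\mathbf{T}^{A\otimes_{k}B}E$. This comes down to expressing $\tau_{A\otimes_{k}B}(E)$ and $\iota_{A\otimes_{k}B}(M)$ through $\tau_{A},\tau_{B},\iota_{A},\iota_{B}$ by means of $(\ref{2.1.3})$, $(\ref{2.1.5})$, $(\ref{2.1.6})$ and the coherence $(\ref{2.1.8})$ --- concretely $\tau_{A\otimes_{k}B}=\tau_{A}\circ\tau_{B}$ and $\iota_{A\otimes_{k}B}=\iota_{B}\circ\iota_{A}$ with the evident $\mathbf{T}$'s inserted and a fixed factorization of $A\otimes_{k}B\rightarrow k$ --- after which rearranging the nested equalizers is routine, finite limits commuting with finite limits doing the real work; compatibility of the projections to $M$ is automatic. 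Finally one records that $\mathbf{T}_{M}^{A}$ and $\alpha^{M}$ need not satisfy $(\ref{2.1.0})$ or $(\ref{2.1.4})$, which is why those clauses are exempted.
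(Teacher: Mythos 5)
The paper offers no proof of Proposition \ref{t2.1.1} beyond the words ``it is easy to see that'' (the construction itself is embedded in the statement), and your verification follows that construction and is sound: the substantive points --- that $\alpha_{\varphi}(E)$ restricts to the two equalizers (exactly the two commuting squares of item 4), the rearrangement of nested equalizers needed for (\ref{2.1.3}), and the identification of the equalizer with the slice exponential needed for (\ref{2.1.1}) --- are precisely the ones you isolate, and your treatment of them is correct and considerably more detailed than the paper's. The one caveat is that your reading of $\tau_{k[X_{1},\dots,X_{n}]}$ and $\iota_{k[X_{1},\dots,X_{n}]}$ as evaluation at $0$ and constant-section inclusion is the intended interpretation (cf.\ Section \ref{s2.2}) rather than something literally forced by Definition \ref{d2.1.1}, whose axioms pin $\alpha$ down only on $\mathbb{R}$ via (\ref{2.1.7}); the paper itself is no more explicit on this point, so this is a wrinkle of the axiomatics rather than a gap in your argument.
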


\begin{definition}
(Local DG-category) A DG-category $\left(  \mathcal{K},\mathbb{R}%
,\mathbf{T},\alpha\right)  $ is called a \underline{local DG-category} if
$\mathcal{K}$ is locally cartesian closed and $\left(  \mathcal{K}%
/M,\mathbb{R}_{M},\mathbf{T}_{M},\alpha^{M}\right)  $ is a DG-category for any
object $M$\ in $\mathcal{K}$.
\end{definition}

\begin{remark}
The notion of microlinearity and that of Weil exponentiability remain the same
as those in \cite{nishi3}.
\end{remark}

In the following, we will consider an arbitrarily chosen local DG-category
$\left(  \mathcal{K},\mathbb{R},\mathbf{T},\alpha\right)  $ with $M$\ being a
microlinear and Weil exponentiable object in $\mathcal{K}$.

\subsection{\label{s2.2}The Duality}

We have already explained the duality between the category of Weil algebras in
the real world and the category of infinitesimal objects in the imaginary
world. Namely, we have a contravariant functor $\mathcal{D}$\ from the
category of Weil algebras to the category of infinitesimal objects and a
contravariant functor $\mathcal{W}$\ from the category of infinitesimal
objects to the category of Weil algebras, both of which constitute a dual
equivalence between the two categories. By way of example, $\mathcal{D}%
_{k\left[  X\right]  /\left(  X^{2}\right)  }$\ is intended for
\[
D=\left\{  x\in\mathbb{R}\mid x^{2}=0\right\}
\]
while $\mathcal{D}_{k\left[  X,Y\right]  /\left(  X^{2},Y^{2},XY\right)  }$ is
intended for
\[
D(2)=\left\{  \left(  x,y\right)  \in D\times D\mid xy=0\right\}
\]
Therefore we have
\begin{align*}
\mathcal{W}_{D}  & =k\left[  X\right]  /\left(  X^{2}\right) \\
\mathcal{W}_{D\left(  2\right)  }  & =k\left[  X,Y\right]  /\left(
X^{2},Y^{2},XY\right)
\end{align*}
Similarly
\[
\mathcal{W}_{d\in D\mapsto\left(  d,0\right)  \in D(2)}%
\]
stands for the homomorphism of $k$-algebras from $k\left[  X,Y\right]
/\left(  X^{2},Y^{2},XY\right)  $ to $k\left[  X\right]  /\left(
X^{2}\right)  $ assigning the equivalence class of $X$\ in $k\left[  X\right]
/\left(  X^{2}\right)  $ to the equivalence class of $X\ $in $\left[
X,Y\right]  /\left(  X^{2},Y^{2},XY\right)  $\ and assigning the equivalence
class of $0$\ in $k\left[  X\right]  /\left(  X^{2}\right)  $ to the
equivalence class of $Y\ $in $\left[  X,Y\right]  /\left(  X^{2}%
,Y^{2},XY\right)  $.

We can extend contravariant functors $\mathcal{D}$\ and $\mathcal{W}$\ so as
to yield a dual equivalence between the category $\mathbf{PFP}\mathbb{T}_{k}$
and the category of (real or imaginary) carved spaces standing for genuinely
formal $\mathrm{Spec}_{\mathbb{R}}$. By way of example, we have
\begin{align*}
\mathcal{D}_{k\left[  Z_{1},Z_{2}\right]  }  & =\mathbb{R}^{2}\\
\mathcal{D}_{k\left[  Z_{1},Z_{2},X,Y\right]  /\left(  X^{2},Y^{2},XY\right)
}  & =\mathbb{R}^{2}\times D(2)
\end{align*}
while
\[
\mathcal{W}_{_{d\in D\mapsto d\in\mathbb{R}}}%
\]
stands for the canonical projection
\[
k[X]\rightarrow k[X]/(X^{2})
\]
and
\[
\mathcal{W}_{\left(  r_{1},r_{2},d_{1},d_{2}\right)  \in\mathbb{R}^{2}\times
D(2)\mapsto r_{1}d_{1}+r_{2}d_{2}\in D}%
\]
stands for the homomorphism of $k$-algebras from $k\left[  X\right]  /\left(
X^{2}\right)  $ to $k\left[  Z_{1},Z_{2},X,Y\right]  /\left(  X^{2}%
,Y^{2},XY\right)  $ assigning the equivalence class of $Z_{1}X+Z_{2}Y$ in
$k\left[  Z_{1},Z_{2},X,Y\right]  /\left(  X^{2},Y^{2},XY\right)  $ to the
equivalence class of $X$\ in $k\left[  X\right]  /\left(  X^{2}\right)  $.

\subsection{\label{s2.3}The Tangent Space}

\begin{definition}
\label{d2.3.1}(\underline{Scalar Multiplication}) The exponential transpose of
the scalar multiplication
\[
\mathbb{R}\times\left(  M\otimes\mathcal{W}_{D}\right)  \rightarrow
M\otimes\mathcal{W}_{D}%
\]
is
\begin{align*}
\mathrm{id}_{M}\otimes\mathcal{W}_{\left(  r,d\right)  \in\mathbb{R}\times
D\mapsto rd\in D}  & :M\otimes\mathcal{W}_{D}\rightarrow M\otimes
\mathcal{W}_{\mathbb{R}\times D}=M\otimes\left(  \mathcal{W}_{D}\otimes
_{k}\mathcal{W}_{\mathbb{R}}\right)  =\\
\left(  M\otimes\mathcal{W}_{D}\right)  \otimes\mathcal{W}_{\mathbb{R}}  &
=\left(  M\otimes\mathcal{W}_{D}\right)  ^{\mathbb{R}}%
\end{align*}

\end{definition}

Now we strengthen one of the main results of \cite{nishi4}\ into

\begin{theorem}
\label{t2.3.1}The canonical projection
\[
\tau_{\mathcal{W}_{D}}\left(  M\right)  :M\otimes\mathcal{W}_{D}\rightarrow M
\]
is an $\mathbb{R}_{M}$-module in the slice category $\mathcal{K}/M$, where
$\mathbb{R}_{M}$\ is the canonical projection
\[
\mathbb{R}\times M\rightarrow M
\]

\end{theorem}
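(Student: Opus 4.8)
The plan is to exhibit all the module-structure morphisms explicitly in $\mathcal{K}/M$ and then verify the module axioms by reducing each of them to an identity between homomorphisms of pointed finitely presented $k$-algebras, which can be checked mechanically in $\mathbf{PFP}\mathbb{T}_{k}$ and then transported to $\mathcal{K}$ via the functoriality encoded in (\ref{2.1.3})--(\ref{2.1.7}). Concretely, addition on $M\otimes\mathcal{W}_{D}$ over $M$ is obtained from the $k$-algebra homomorphism $\mathcal{W}_{(d_{1},d_{2})\in D(2)\mapsto d_{1}+d_{2}\in D}$ together with the microlinearity isomorphism $M\otimes\mathcal{W}_{D(2)}\cong (M\otimes\mathcal{W}_{D})\times_{M}(M\otimes\mathcal{W}_{D})$; the zero section is $\iota_{\mathcal{W}_{D}}(M)$; negation comes from $\mathcal{W}_{d\in D\mapsto -d\in D}$; and scalar multiplication is the morphism of Definition \ref{d2.3.1}, whose exponential transpose is $\mathrm{id}_{M}\otimes\mathcal{W}_{(r,d)\mapsto rd}$. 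All of these are morphisms over $M$ because each algebra homomorphism in question commutes with the structure maps $k\to\mathcal{W}_{D}$ and $\mathcal{W}_{D}\to k$ (equivalently, fixes the base point of the infinitesimal object), so by naturality of $\alpha$ and the description of $\tau$, $\iota$ in the second remark they descend to $\mathcal{K}/M$, i.e. they live in $\overline{\mathbf{T}}_{M}^{\mathcal{W}_{D}}(\mathrm{id}_{M})$ in the sense of Proposition \ref{t2.1.1}.

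First I would record the microlinearity isomorphisms I need: that $M\otimes\mathcal{W}_{D(2)}$ is the fibered product of two copies of $M\otimes\mathcal{W}_{D}$ over $M$, and more generally that $\otimes$ sends the relevant pushouts of pointed algebras (which are the limits defining $D(2)$, $D\times D$, etc. on the imaginary side) to the corresponding pullbacks in $\mathcal{K}$; this is exactly what microlinearity of $M$ buys us, and it is what lets us even phrase "addition" as a single morphism rather than a span. Then I would verify, one axiom at a time, that the required diagram in $\mathcal{K}/M$ commutes. For associativity and commutativity of addition, both sides are built by applying $M\otimes(-)$ to two parallel chains of algebra homomorphisms $\mathcal{W}_{D(3)}\to\mathcal{W}_{D}$ (resp. using the swap on $D(2)$), and these chains are literally equal as maps of $k$-algebras; by functoriality of $\otimes$ in $\mathbf{PFP}\mathbb{T}_{k}$ (the first remark) and the microlinearity identifications, equality descends to the diagram in $\mathcal{K}$. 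The unit axioms reduce to $\mathcal{W}_{d\mapsto(d,0)}$ followed by $\mathcal{W}_{(d_{1},d_{2})\mapsto d_{1}+d_{2}}$ being the identity of $\mathcal{W}_{D}$, which it is. For the scalar-multiplication axioms — $1\cdot x=x$, $(rs)\cdot x=r\cdot(s\cdot x)$, $(r+s)\cdot x=r\cdot x+s\cdot x$, $r\cdot(x+y)=r\cdot x+r\cdot y$ — one passes to exponential transposes and uses (\ref{2.1.4}) together with (\ref{2.1.3}): each side becomes $\mathrm{id}_{M}\otimes$(a homomorphism between suitable tensor powers of $\mathcal{W}_{\mathbb{R}}$ and $\mathcal{W}_{D}$), and the two homomorphisms coincide because they encode the same polynomial identities in $r,s,d$. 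Here the compatibility (\ref{2.1.7}), $\alpha_{\varphi}(\mathbb{R})=\mathbb{R}\underline{\otimes}\varphi$, is what guarantees that the $\mathbb{R}$-action we have written down is the one induced by the $k$-algebra-object structure of $\mathbb{R}$, so that "module over $\mathbb{R}_{M}$" is the correct statement and not merely "module over $k$".

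The step I expect to be the main obstacle is bookkeeping with the slice category: one must check that every morphism introduced genuinely is a morphism over $M$ and that the fibered-product identifications of Proposition \ref{t2.1.1} are compatible with the microlinearity identifications of $M\otimes(-)$, so that "addition over $M$" in $\mathcal{K}/M$ really does correspond fiberwise to the addition $M\otimes\mathcal{W}_{D(2)}\to M\otimes\mathcal{W}_{D}$ upstairs. Once one has set up the dictionary — slice-category limits computed as in Proposition \ref{t2.1.1}, and $\otimes$-images of imaginary-world limits computed by microlinearity — each module axiom is a one-line reduction to a visibly true identity of $k$-algebra maps, exactly as in the corresponding argument of \cite{nishi4}, the only new ingredient being that scalars now live in the object $\mathbb{R}$ rather than in the external ring $k$, which is handled uniformly by (\ref{2.1.7}).
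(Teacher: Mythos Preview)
Your proposal is correct and follows essentially the same strategy as the paper: reduce each $\mathbb{R}_{M}$-module axiom to an identity between homomorphisms in $\mathbf{PFP}\mathbb{T}_{k}$ and transport it to $\mathcal{K}$ via the functoriality of $\otimes$ and the microlinearity identification $M\otimes\mathcal{W}_{D(2)}\cong(M\otimes\mathcal{W}_{D})\times_{M}(M\otimes\mathcal{W}_{D})$. The paper's own proof treats only the distributivity of scalar multiplication over addition as a representative case, exhibiting exactly the kind of commutative square of $\mathrm{id}_{M}\otimes\mathcal{W}_{\cdots}$ morphisms you describe, and leaves the remaining axioms implicit; your outline simply makes the full checklist explicit.
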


\begin{proof}
Here we deal only with the statement that the scalar multiplication
distributes over the addition, for which we have to verify that the diagram
\[%
\begin{array}
[c]{ccc}%
\mathbb{R}\times\left(  M\otimes\mathcal{W}_{D(2)}\right)  & \rightarrow &
\mathbb{R}\times\left(  M\otimes\mathcal{W}_{D}\right) \\
\downarrow &  & \downarrow\\
M\otimes\mathcal{W}_{D(2)} & \rightarrow & M\otimes\mathcal{W}_{D}%
\end{array}
\]
commutes, where the horizontal arrows stand for addition, while the vertical
arrows correspond to scalar multiplication. This follows easily from the
commutativity of the diagram
\[%
\begin{array}
[c]{ccc}%
M\otimes\mathcal{W}_{D(2)} & \underrightarrow{\mathrm{id}_{M}\otimes
\mathcal{W}_{d\in D\mapsto\left(  d,d\right)  \in D\left(  2\right)  }} &
M\otimes\mathcal{W}_{D}\\
\downarrow &  & \downarrow\\
M\otimes\mathcal{W}_{\mathbb{R}\times D\left(  2\right)  } & \overrightarrow
{\mathrm{id}_{M}\otimes\mathcal{W}_{\left(  r,d\right)  \in\mathbb{R}\times
D\mapsto\left(  r,d,d\right)  \in\mathbb{R}\times D\left(  2\right)  }} &
M\otimes\mathcal{W}_{\mathbb{R}\times D}%
\end{array}
\]
where the left vertical arrow is
\[
\mathrm{id}_{M}\otimes\mathcal{W}_{\left(  r,d_{1},d_{2}\right)  \in
\mathbb{R}\times D\mapsto\left(  rd_{1},rd_{2}\right)  \in D}%
\]
while the right vertical arrow is
\[
\mathrm{id}_{M}\otimes\mathcal{W}_{\left(  r,d\right)  \in\mathbb{R}\times
D\mapsto rd\in D}%
\]

\end{proof}

\section{\label{s3}Euclidean Modules}

An $\mathbb{R}$-module in $\mathcal{K}$\ is an object $\mathbb{E}$\ in
$\mathcal{K}$\ endowed with a morphism
\[
\mathbb{+}_{\mathbb{E}}:\mathbb{E\times E\rightarrow E}\text{,}%
\]
intended for addition, and a morphism
\[
\cdot_{\mathbb{R},\mathbb{E}}:\mathbb{R}\times\mathbb{E\rightarrow E}\text{,}%
\]
intended for scalar multiplication, which are surely subject to the usual
axioms of an $\mathbb{R}$-module depicted diagrammatically. Equivalently, an
$\mathbb{R}$-module structure on an object $\mathbb{E}$\ in $\mathcal{K}$\ can
be given by a single morphism
\[
\varphi:\mathbb{R}\times\mathbb{E\times E\rightarrow E}%
\]
intended for the morphism
\[
\mathbb{R}\times\mathbb{E\times E\,}\underrightarrow{\cdot_{\mathbb{R}%
,\mathbb{E}}\times\mathrm{id}_{\mathbb{E}}}\,\mathbb{E\times E\,}%
\underrightarrow{\mathbb{\mathbb{+}_{\mathbb{E}}}}\mathbb{\,E}%
\]
which is surely subject to some axioms depicted diagrammatically.

\begin{definition}
\label{d3.1}(\underline{Euclidean $\mathbb{R}$-module}) An $\mathbb{R}%
$-module$\mathbb{E}$ \ is called \underline{Euclidean} provided that the
composition of the exponential transpose
\begin{equation}
\mathbb{E\times E\rightarrow E}^{\mathbb{R}}\label{d3.1.1}%
\end{equation}
of
\begin{equation}
\mathbb{R}\times\mathbb{E\times E=E\times R}\times\mathbb{E\,}\underrightarrow
{\mathrm{id}_{\mathbb{E}}\times\cdot_{\mathbb{R},\mathbb{E}}}\,\mathbb{E\times
E\,}\underrightarrow{\mathbb{+}_{\mathbb{E}}}\,\mathbb{E}\label{d3.1.2}%
\end{equation}
and
\begin{equation}
\alpha_{\mathcal{W}_{d\in D\mapsto d\in\mathbb{R}}}\left(  \mathbb{E}\right)
:\mathbb{E}^{\mathbb{R}}=\mathbb{E}\otimes\mathcal{W}_{\mathbb{R}}%
\rightarrow\mathbb{E}\otimes\mathcal{W}_{D}\label{d3.1.3}%
\end{equation}
in succession is an isomorphism.
\end{definition}

It should be obvious that

\begin{lemma}
\label{t3.1}The $\mathbb{R}$-module structure of $\mathbb{E}$\ naturally gives
rise to that of $\mathbb{E}^{X}$ for any object $X$ in $\mathcal{K}$ in the
sense that the exponential transpose
\[
\widetilde{\varphi}:\mathbb{E\times E\rightarrow E}^{\mathbb{R}}%
\]
of the $\mathbb{R}$-module structure
\[
\varphi:\mathbb{R}\times\mathbb{E\times E\rightarrow E}%
\]
on $\mathbb{E}$ induces a mapping
\[
\left(  \widetilde{\varphi}\right)  ^{X}:\mathbb{E}^{X}\mathbb{\times E}%
^{X}=\left(  \mathbb{E\times E}\right)  ^{X}\,\rightarrow\,\left(
\mathbb{E}^{\mathbb{R}}\right)  ^{X}=\left(  \mathbb{E}^{X}\right)
^{\mathbb{R}}\text{,}%
\]
which is the exponential transpose of the derived $\mathbb{R}$-module
structure
\[
\mathbb{R}\times\mathbb{E}^{X}\mathbb{\times E}^{X}\rightarrow\mathbb{E}^{X}%
\]
on $\mathbb{E}^{X}$.
\end{lemma}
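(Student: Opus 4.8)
The plan is to exploit that the internal-hom functor $\left(  \cdot\right)  ^{X}:\mathcal{K}\rightarrow\mathcal{K}$ is a right adjoint, hence preserves finite products, so that it carries models of any finite-product algebraic theory (abelian group objects, $\mathbb{R}$-module objects) to models of the same theory. First I would note that applying $\left(  \cdot\right)  ^{X}$ to the group part $\mathbb{+}_{\mathbb{E}}:\mathbb{E}\times\mathbb{E}\rightarrow\mathbb{E}$ of the module structure and composing with the canonical isomorphism $\left(  \mathbb{E}\times\mathbb{E}\right)  ^{X}=\mathbb{E}^{X}\times\mathbb{E}^{X}$ yields an addition $\mathbb{+}_{\mathbb{E}^{X}}:\mathbb{E}^{X}\times\mathbb{E}^{X}\rightarrow\mathbb{E}^{X}$; since $\left(  \cdot\right)  ^{X}$ preserves products, the commutativity, associativity, unit and inverse axioms, being diagrams built from products, transport verbatim from $\mathbb{E}$ to $\mathbb{E}^{X}$.

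For scalar multiplication I would introduce the canonical \emph{constants} morphism $c_{X}:\mathbb{R}\rightarrow\mathbb{R}^{X}$, namely the exponential transpose of the projection $\mathbb{R}\times X\rightarrow\mathbb{R}$, equivalently $\mathbb{R}^{\,!}$ for the unique $!:X\rightarrow 1$. This $c_{X}$ is a homomorphism of $k$-algebra objects: the multiplication, addition and constants of $\mathbb{R}^{X}$ are obtained by applying the product-preserving functor $\left(  \cdot\right)  ^{X}$ to those of $\mathbb{R}$, and the squares expressing that these are natural in the exponent force $\mathbb{R}^{g}$ to be an algebra homomorphism for every $g$, in particular for $g=\,!$. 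Then the scalar multiplication on $\mathbb{E}^{X}$ is defined to be the composite
\[
\mathbb{R}\times\mathbb{E}^{X}\,\underrightarrow{\,c_{X}\times\mathrm{id}_{\mathbb{E}^{X}}\,}\,\mathbb{R}^{X}\times\mathbb{E}^{X}=\left(  \mathbb{R}\times\mathbb{E}\right)  ^{X}\,\underrightarrow{\,\left(  \cdot_{\mathbb{R},\mathbb{E}}\right)  ^{X}\,}\,\mathbb{E}^{X}.
\]
The module compatibilities — distributivity over the two additions, associativity of the action and the unit law — then reduce, upon applying $\left(  \cdot\right)  ^{X}$ to the corresponding diagrams for $\mathbb{E}$ and inserting $c_{X}$ wherever a scalar occurs, to the fact that $c_{X}$ respects the ring operations; each is a short diagram chase.

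Finally I would verify the assertion actually made in the statement: that $\left(  \widetilde{\varphi}\right)  ^{X}$, read through the canonical identifications $\left(  \mathbb{E}\times\mathbb{E}\right)  ^{X}=\mathbb{E}^{X}\times\mathbb{E}^{X}$ and $\left(  \mathbb{E}^{\mathbb{R}}\right)  ^{X}=\left(  \mathbb{E}^{X}\right)  ^{\mathbb{R}}$, is the exponential transpose of the single-morphism form $\varphi^{X}:\mathbb{R}\times\mathbb{E}^{X}\times\mathbb{E}^{X}\rightarrow\mathbb{E}^{X}$ of the derived structure described above. This is the statement that transposition commutes with $\left(  \cdot\right)  ^{X}$: the canonical iso $\left(  Z^{\mathbb{R}}\right)  ^{X}\cong Z^{\mathbb{R}\times X}\cong\left(  Z^{X}\right)  ^{\mathbb{R}}$ turns ``transpose, then raise to the $X$'' into ``raise to the $X$, then transpose'', and unwinding both sides against the universal property of the exponentials identifies them; the $c_{X}$ inserted above is precisely what the evaluation $X\times\mathbb{E}^{X}\rightarrow\mathbb{E}$ contributes when one computes the transpose of $\varphi^{X}$. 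The routine content is the collection of commuting diagrams for the module axioms; the only step requiring care is this last paragraph's bookkeeping of cartesian-closed coherence, namely using the three natural isomorphisms $\left(  \mathbb{E}\times\mathbb{E}\right)  ^{X}\cong\mathbb{E}^{X}\times\mathbb{E}^{X}$, $\left(  \mathbb{E}^{\mathbb{R}}\right)  ^{X}\cong\left(  \mathbb{E}^{X}\right)  ^{\mathbb{R}}$ and the exponential adjunction consistently — once that is pinned down, the identification of transposes is forced.
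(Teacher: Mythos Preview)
Your proposal is correct. The paper, however, gives no proof at all: the lemma is introduced with the phrase ``It should be obvious that'' and is left unproved. So there is nothing to compare against at the level of argument; you have simply supplied the details the author deemed unnecessary. Your approach --- using that $\left(\cdot\right)^{X}$ is a product-preserving right adjoint to transport the algebraic structure, inserting the constants map $c_{X}:\mathbb{R}\rightarrow\mathbb{R}^{X}$ for the scalars, and then checking the cartesian-closed coherence identifying $\left(\widetilde{\varphi}\right)^{X}$ with the transpose of the derived structure --- is exactly the standard way to justify this, and is presumably what the author had in mind.
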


\begin{proposition}
\label{t3.2}If $\mathbb{E}$ is a Euclidean $\mathbb{R}$-module, then so is
$\mathbb{E}^{X}$ for any object $X$ in $\mathcal{K}$.
\end{proposition}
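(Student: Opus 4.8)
The plan is to show that the Euclidean property is preserved under exponentiation by $X$ by checking that the defining isomorphism for $\mathbb{E}$ exponentiates to the defining isomorphism for $\mathbb{E}^{X}$. Concretely, the composite to be inverted for $\mathbb{E}^{X}$ is the exponential transpose of the map built from the derived $\mathbb{R}$-module structure on $\mathbb{E}^{X}$ (supplied by Lemma \ref{t3.1}) followed by $\alpha_{\mathcal{W}_{d\in D\mapsto d\in\mathbb{R}}}(\mathbb{E}^{X})$. First I would observe that, by Lemma \ref{t3.1}, the map $\mathbb{E}^{X}\times\mathbb{E}^{X}\to(\mathbb{E}^{X})^{\mathbb{R}}$ coming from (\ref{d3.1.2}) applied to $\mathbb{E}^{X}$ is literally $(\widetilde{\psi})^{X}$, where $\widetilde{\psi}:\mathbb{E}\times\mathbb{E}\to\mathbb{E}^{\mathbb{R}}$ is the transpose of the map (\ref{d3.1.2}) for $\mathbb{E}$ itself, using the canonical identifications $(\mathbb{E}\times\mathbb{E})^{X}=\mathbb{E}^{X}\times\mathbb{E}^{X}$ and $(\mathbb{E}^{\mathbb{R}})^{X}=(\mathbb{E}^{X})^{\mathbb{R}}$.

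Next I would handle the second factor (\ref{d3.1.3}). The key input here is the axiom (\ref{2.1.0}) that each $\mathbf{T}^{A}$ preserves cartesian closed structure together with the axiom (\ref{2.1.4}) that each $\alpha_{\varphi}$ respects it, i.e. $\alpha_{\varphi}(Z^{X})=(\alpha_{\varphi}(Z))^{X}$. Applying this with $\varphi=\mathcal{W}_{d\in D\mapsto d\in\mathbb{R}}$ and $Z=\mathbb{E}$ gives
\[
\alpha_{\mathcal{W}_{d\in D\mapsto d\in\mathbb{R}}}\bigl(\mathbb{E}^{X}\bigr)=\bigl(\alpha_{\mathcal{W}_{d\in D\mapsto d\in\mathbb{R}}}(\mathbb{E})\bigr)^{X}:\bigl(\mathbb{E}^{\mathbb{R}}\bigr)^{X}=\bigl(\mathbb{E}\otimes\mathcal{W}_{\mathbb{R}}\bigr)^{X}\longrightarrow\bigl(\mathbb{E}\otimes\mathcal{W}_{D}\bigr)^{X}=\bigl(\mathbb{E}^{X}\bigr)\otimes\mathcal{W}_{D}.
\]
Here I am also using $\mathbf{T}^{A}(Z^{X})=(\mathbf{T}^{A}Z)^{X}$, i.e. $(\mathbb{E}\otimes\mathcal{W}_{D})^{X}=\mathbb{E}^{X}\otimes\mathcal{W}_{D}$, to reconcile the target with the shape of (\ref{d3.1.3}) for $\mathbb{E}^{X}$; I should also check that this identification is compatible with the derived module structure, but that is part of the statement of Lemma \ref{t3.1}.

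Combining the two steps, the composite whose invertibility defines Euclideaness of $\mathbb{E}^{X}$ is exactly $(-)^{X}$ applied to the composite whose invertibility defines Euclideaness of $\mathbb{E}$. Since $\mathbb{E}$ is assumed Euclidean, that latter composite is an isomorphism, and applying the functor $(-)^{X}:\mathcal{K}\to\mathcal{K}$ sends isomorphisms to isomorphisms; hence $\mathbb{E}^{X}$ is Euclidean. I expect the only delicate point to be bookkeeping: making sure that all the canonical identifications $(\mathbb{E}\times\mathbb{E})^{X}\cong\mathbb{E}^{X}\times\mathbb{E}^{X}$, $(\mathbb{E}^{\mathbb{R}})^{X}\cong(\mathbb{E}^{X})^{\mathbb{R}}$, and $(\mathbb{E}\otimes\mathcal{W}_{D})^{X}\cong\mathbb{E}^{X}\otimes\mathcal{W}_{D}$ are mutually compatible, so that the single big diagram genuinely is the $X$-power of the single big diagram for $\mathbb{E}$; once that naturality is in place the argument is a one-line functoriality observation. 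No genuinely new idea beyond (\ref{2.1.0}), (\ref{2.1.4}) and Lemma \ref{t3.1} should be needed.
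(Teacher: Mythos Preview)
Your proposal is correct and follows essentially the same route as the paper: identify the Euclidean composite for $\mathbb{E}^{X}$ as the $X$-power of the Euclidean composite for $\mathbb{E}$ using Lemma~\ref{t3.1} for the first factor and axiom~(\ref{2.1.4}) for the second, then conclude by functoriality of $(-)^{X}$. The paper's proof is terser but makes exactly the same two observations, writing the key identity as $\bigl(\alpha_{\mathcal{W}_{d\in D\mapsto d\in\mathbb{R}}}(\mathbb{E})\bigr)^{X}\circ(\widetilde{\varphi})^{X}=\bigl(\alpha_{\mathcal{W}_{d\in D\mapsto d\in\mathbb{R}}}(\mathbb{E})\circ\widetilde{\varphi}\bigr)^{X}$.
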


\begin{proof}
We use the same notation as in Lemma \ref{t3.1}. We have
\begin{align*}
\alpha_{\mathcal{W}_{_{d\in D\mapsto d\in\mathbb{R}}}}\left(  \mathbb{E}%
^{X}\right)   & =\left(  \alpha_{\mathcal{W}_{_{d\in D\mapsto d\in\mathbb{R}}%
}}\left(  \mathbb{E}\right)  \right)  ^{X}:\left(  \mathbb{E}^{X}\right)
^{\mathbb{R}}=\left(  \mathbb{E}^{\mathbb{R}}\right)  ^{X}=\left(
\mathbb{E}\otimes\mathcal{W}_{\mathbb{R}}\right)  ^{X}\rightarrow\\
\left(  \mathbb{E}\otimes\mathcal{W}_{D}\right)  ^{X}  & =\mathbb{E}%
^{X}\otimes\mathcal{W}_{D}%
\end{align*}
Since
\[
\left(  \alpha_{\mathcal{W}_{_{d\in D\mapsto d\in\mathbb{R}}}}\left(
\mathbb{E}\right)  \right)  ^{X}\circ\left(  \widetilde{\varphi}\right)
^{X}=\left(  \alpha_{\mathcal{W}_{_{d\in D\mapsto d\in\mathbb{R}}}}\left(
\mathbb{E}\right)  \circ\widetilde{\varphi}\right)  ^{X}%
\]
we are sure that $\mathbb{E}^{X}$ is a Euclidean $\mathbb{R}$-module.
\end{proof}

It should be evident that

\begin{lemma}
\label{t3.3}The $\mathbb{R}$-module structure of $\mathbb{E}$\ naturally gives
rise to that of $\mathbb{E}\otimes W$ for any Weil algebra $W$ in the sense
that the exponential transpose
\[
\widetilde{\varphi}:\mathbb{E\times E\rightarrow E}^{\mathbb{R}}%
\]
of the $\mathbb{R}$-module structure
\[
\varphi:\mathbb{R}\times\mathbb{E\times E\rightarrow E}%
\]
on $\mathbb{E}$ induces a mapping
\[
\widetilde{\varphi}\otimes\mathrm{id}_{W}:(\mathbb{E}\otimes W)\mathbb{\times
}(\mathbb{E}\otimes W)=\left(  \mathbb{E\times E}\right)  \otimes
W\rightarrow\mathbb{E}^{\mathbb{R}}\otimes W=\left(  \mathbb{E}\otimes
W\right)  ^{\mathbb{R}}\text{,}%
\]
which is the exponential transpose of the derived $\mathbb{R}$-module
structure
\[
\mathbb{R}\times(\mathbb{E}\otimes W)\mathbb{\times}(\mathbb{E}\otimes
W)\rightarrow\mathbb{E}\otimes W
\]
on $\mathbb{E}\otimes W$.
\end{lemma}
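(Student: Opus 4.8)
The plan is to imitate the proof of Lemma \ref{t3.1} almost verbatim, replacing the functor $(\cdot)^{X}$ by the prolongation functor $\mathbf{T}^{W}=\cdot\otimes W$. A Weil algebra $W$ lies in $\mathbf{PFP}\mathbb{T}_{k}$, so by Definition \ref{d2.1.1} the functor $\mathbf{T}^{W}$ is left exact (hence finite-product-preserving) and preserves the cartesian closed structure in the sense of (\ref{2.1.0}); these are precisely the properties of $(\cdot)^{X}$ that were used in Lemma \ref{t3.1}. Concretely, I would first apply $\mathbf{T}^{W}$ to the single morphism $\varphi:\mathbb{R}\times\mathbb{E}\times\mathbb{E}\rightarrow\mathbb{E}$ encoding the $\mathbb{R}$-module structure of $\mathbb{E}$. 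Commuting $\mathbf{T}^{W}$ past the products by left-exactness and using (\ref{2.1.2}), this yields a morphism
\[
\mathbf{T}^{W}\varphi:\left(  \mathbb{R}\underline{\otimes}W\right)
\times\left(  \mathbb{E}\otimes W\right)  \times\left(  \mathbb{E}\otimes
W\right)  \rightarrow\mathbb{E}\otimes W.
\]
Since $\mathbf{T}^{W}$ carries the $k$-algebra structure of $\mathbb{R}$ to that of $\mathbb{R}\underline{\otimes}W$ and preserves every commutative diagram, $\mathbf{T}^{W}\varphi$ is an $\left(  \mathbb{R}\underline{\otimes}W\right)  $-module structure on $\mathbb{E}\otimes W$.

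Next I would restrict scalars along the canonical $k$-algebra homomorphism $\iota_{W}(\mathbb{R})=\alpha_{k\rightarrow W}(\mathbb{R}):\mathbb{R}\rightarrow\mathbb{R}\underline{\otimes}W$, which is $\mathbb{R}\underline{\otimes}(k\rightarrow W)$ by (\ref{2.1.7}); that is, precompose $\mathbf{T}^{W}\varphi$ with $\iota_{W}(\mathbb{R})\times\mathrm{id}\times\mathrm{id}$. This produces the desired morphism
\[
\mathbb{R}\times\left(  \mathbb{E}\otimes W\right)  \times\left(
\mathbb{E}\otimes W\right)  \rightarrow\mathbb{E}\otimes W,
\]
and since restriction of scalars along a ring homomorphism never destroys the module axioms, this is a genuine $\mathbb{R}$-module structure on $\mathbb{E}\otimes W$. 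It then remains only to identify its exponential transpose with $\widetilde{\varphi}\otimes\mathrm{id}_{W}=\mathbf{T}^{W}\widetilde{\varphi}$: because $\widetilde{\varphi}$ is the transpose of $\varphi$, applying $\mathbf{T}^{W}$ and using the identification $\mathbf{T}^{W}(\mathbb{E}^{\mathbb{R}})=(\mathbf{T}^{W}\mathbb{E})^{\mathbb{R}}$ from (\ref{2.1.0}), together with the compatibility of $\mathbf{T}^{W}$ with evaluation morphisms (which is exactly what ``preserving the cartesian closed structure'' amounts to, once one inserts $\iota_{W}(\mathbb{R})$ in the $\mathbb{R}$-slot), the transpose of $\mathbf{T}^{W}\widetilde{\varphi}$ coincides with the morphism just constructed. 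This mirrors the displayed identity $\left(\alpha_{\mathcal{W}_{d\in D\mapsto d\in\mathbb{R}}}(\mathbb{E})\right)^{X}\circ(\widetilde{\varphi})^{X}=\left(\alpha_{\mathcal{W}_{d\in D\mapsto d\in\mathbb{R}}}(\mathbb{E})\circ\widetilde{\varphi}\right)^{X}$ used in the proof of Proposition \ref{t3.2}.

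The only genuine work, and hence the main obstacle, is this last bookkeeping step: chasing the exponential transpose through $\mathbf{T}^{W}$ while keeping straight that $\mathbf{T}^{W}$ turns $\mathbb{R}$ into $\mathbb{R}\underline{\otimes}W$, so that the evaluation morphism on $\mathbb{E}^{\mathbb{R}}$ must be composed with $\iota_{W}(\mathbb{R})$ before $\mathbf{T}^{W}$ is applied. Everything else — product-preservation, naturality of $\alpha$, and restriction of scalars — is formal and follows Lemma \ref{t3.1} and the proof of Proposition \ref{t3.2} line by line, which is presumably why the statement is prefaced by ``It should be evident that.''
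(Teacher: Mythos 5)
Your argument is correct, and since the paper offers no proof of Lemma \ref{t3.3} at all (it is simply prefaced by ``It should be evident that''), your functorial route---apply the product-preserving functor $\mathbf{T}^{W}$ to $\varphi$ to get an $(\mathbb{R}\underline{\otimes}W)$-module structure, restrict scalars along $\iota_{W}(\mathbb{R}):\mathbb{R}\rightarrow\mathbb{R}\underline{\otimes}W$, and identify the transpose with $\widetilde{\varphi}\otimes\mathrm{id}_{W}$ via (\ref{2.1.0})---is exactly the intended one, mirroring Lemma \ref{t3.1} and the way Proposition \ref{t3.4} uses it. The one point you rightly flag, that the identification $\mathbf{T}^{W}(\mathbb{E}^{\mathbb{R}})=(\mathbf{T}^{W}\mathbb{E})^{\mathbb{R}}$ must be compatible with evaluation after inserting $\iota_{W}(\mathbb{R})$, is indeed the only substantive step, and it is covered by the intended (if only tersely axiomatized) meaning of ``preserving cartesian closed structures.''
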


\begin{proposition}
\label{t3.4}If $\mathbb{E}$ is a Euclidean $\mathbb{R}$-module, then so is
$\mathbb{E}\otimes W$ for any Weil algebra $W$.
\end{proposition}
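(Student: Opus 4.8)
The plan is to carry over the proof of Proposition~\ref{t3.2} almost verbatim, with the exponentiation functor $\left(-\right)^{X}$ replaced by the prolongation functor $\mathbf{T}^{W}=\left(-\right)\otimes W$ and the cartesian-closedness clause (\ref{2.1.4}) replaced by the compatibility clause (\ref{2.1.8}). First I would record the canonical identifications furnished by the axioms: by (\ref{2.1.3}) together with the symmetry of $\otimes_{k}$ one has $\mathbf{T}^{\mathcal{W}_{\mathbb{R}}}\circ\mathbf{T}^{W}=\mathbf{T}^{W\otimes_{k}\mathcal{W}_{\mathbb{R}}}=\mathbf{T}^{\mathcal{W}_{\mathbb{R}}\otimes_{k}W}=\mathbf{T}^{W}\circ\mathbf{T}^{\mathcal{W}_{\mathbb{R}}}$, whence $\left(\mathbb{E}\otimes W\right)^{\mathbb{R}}=\mathbf{T}^{W}\left(\mathbb{E}^{\mathbb{R}}\right)$, and likewise $\left(\mathbb{E}\otimes W\right)\otimes\mathcal{W}_{D}=\mathbf{T}^{W}\left(\mathbb{E}\otimes\mathcal{W}_{D}\right)$; moreover, since $\mathbf{T}^{W}$ is left exact it preserves binary products, so $\left(\mathbb{E}\otimes W\right)\times\left(\mathbb{E}\otimes W\right)=\mathbf{T}^{W}\left(\mathbb{E}\times\mathbb{E}\right)$.

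Next I would identify the two maps being composed in Definition~\ref{d3.1} for $\mathbb{E}\otimes W$. The first is the exponential transpose of the derived $\mathbb{R}$-module structure on $\mathbb{E}\otimes W$; by Lemma~\ref{t3.3} this is $\widetilde{\varphi}\otimes\mathrm{id}_{W}$, which by the Remark following Definition~\ref{d2.1.1} and by (\ref{2.1.6}) is nothing but $\mathbf{T}^{W}\widetilde{\varphi}$, the passage between exponential transposes being legitimate because $\mathbf{T}^{W}$ preserves the cartesian closed structure by (\ref{2.1.0}). The second is $\alpha_{\mathcal{W}_{d\in D\mapsto d\in\mathbb{R}}}\left(\mathbb{E}\otimes W\right)$, and applying the compatibility axiom (\ref{2.1.8}) with $\varphi=\mathcal{W}_{d\in D\mapsto d\in\mathbb{R}}$, $C=W$ and $X=\mathbb{E}$ gives
\[
\alpha_{\mathcal{W}_{d\in D\mapsto d\in\mathbb{R}}}\left(\mathbb{E}\otimes W\right)=\mathbf{T}^{W}\!\left(\alpha_{\mathcal{W}_{d\in D\mapsto d\in\mathbb{R}}}\left(\mathbb{E}\right)\right).
\]

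Composing the two, the morphism whose invertibility is required for $\mathbb{E}\otimes W$ to be Euclidean equals
\[
\mathbf{T}^{W}\!\left(\alpha_{\mathcal{W}_{d\in D\mapsto d\in\mathbb{R}}}\left(\mathbb{E}\right)\right)\circ\mathbf{T}^{W}\widetilde{\varphi}=\mathbf{T}^{W}\!\left(\alpha_{\mathcal{W}_{d\in D\mapsto d\in\mathbb{R}}}\left(\mathbb{E}\right)\circ\widetilde{\varphi}\right),
\]
and the composite inside the parentheses is an isomorphism precisely because $\mathbb{E}$ is a Euclidean $\mathbb{R}$-module. Since every functor preserves isomorphisms, $\mathbf{T}^{W}$ of it is again an isomorphism, so $\mathbb{E}\otimes W$ is Euclidean.

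I expect the only genuine work to be the bookkeeping in the first two paragraphs: checking that the source and target of $\alpha_{\mathcal{W}_{d\in D\mapsto d\in\mathbb{R}}}\left(\mathbb{E}\otimes W\right)$ agree, under the symmetry isomorphisms $W\otimes_{k}\mathcal{W}_{\mathbb{R}}\cong\mathcal{W}_{\mathbb{R}}\otimes_{k}W$ and $W\otimes_{k}\mathcal{W}_{D}\cong\mathcal{W}_{D}\otimes_{k}W$, with the $\mathbf{T}^{W}$-images of the source and target of $\alpha_{\mathcal{W}_{d\in D\mapsto d\in\mathbb{R}}}\left(\mathbb{E}\right)$, and that the exponential transpose occurring in Definition~\ref{d3.1} for $\mathbb{E}\otimes W$ is literally $\mathbf{T}^{W}$ of the one for $\mathbb{E}$ and not merely isomorphic to it. Granting axioms (\ref{2.1.0}), (\ref{2.1.3}), (\ref{2.1.6}) and (\ref{2.1.8}) together with Lemma~\ref{t3.3}, this is entirely formal, and the argument is otherwise identical in shape to that of Proposition~\ref{t3.2}.
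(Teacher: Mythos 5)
Your proof is correct and takes essentially the same route as the paper: the paper's own argument likewise identifies $\alpha_{\mathcal{W}_{d\in D\mapsto d\in\mathbb{R}}}\left(\mathbb{E}\otimes W\right)$ with $\alpha_{\mathcal{W}_{d\in D\mapsto d\in\mathbb{R}}}\left(\mathbb{E}\right)\otimes\mathrm{id}_{W}$ (your $\mathbf{T}^{W}$-image), takes the transpose $\widetilde{\varphi}\otimes\mathrm{id}_{W}$ from Lemma \ref{t3.3}, and concludes from $\left(\alpha_{\mathcal{W}_{d\in D\mapsto d\in\mathbb{R}}}\left(\mathbb{E}\right)\otimes\mathrm{id}_{W}\right)\circ\left(\widetilde{\varphi}\otimes\mathrm{id}_{W}\right)=\left(\alpha_{\mathcal{W}_{d\in D\mapsto d\in\mathbb{R}}}\left(\mathbb{E}\right)\circ\widetilde{\varphi}\right)\otimes\mathrm{id}_{W}$ being an isomorphism. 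The only cosmetic difference is that you spell out the appeals to (\ref{2.1.3}), (\ref{2.1.6}) and (\ref{2.1.8}) where the paper simply writes the corresponding identifications.
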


\begin{proof}
We use the same notation as in Lemma \ref{t3.3}. We have
\begin{align*}
\alpha_{\mathcal{W}_{_{d\in D\mapsto d\in\mathbb{R}}}}\left(  \mathbb{E}%
\otimes W\right)   & =\alpha_{\mathcal{W}_{_{d\in D\mapsto d\in\mathbb{R}}}%
}\left(  \mathbb{E}\right)  \otimes\mathrm{id}_{W}:\left(  \mathbb{E}\otimes
W\right)  ^{\mathbb{R}}=\left(  \mathbb{E}\otimes W\right)  \otimes
\mathcal{W}_{\mathbb{R}}\\
& =\left(  \mathbb{E}\otimes\mathcal{W}_{\mathbb{R}}\right)  \otimes
W\rightarrow\left(  \mathbb{E}\otimes\mathcal{W}_{D}\right)  \otimes W=\left(
\mathbb{E}\otimes W\right)  \otimes\mathcal{W}_{D}%
\end{align*}
Since
\[
\left(  \alpha_{\mathcal{W}_{_{d\in D\mapsto d\in\mathbb{R}}}}\left(
\mathbb{E}\right)  \otimes\mathrm{id}_{W}\right)  \circ\left(  \widetilde
{\varphi}\otimes\mathrm{id}_{W}\right)  =\left(  \alpha_{\mathcal{W}_{_{d\in
D\mapsto d\in\mathbb{R}}}}\left(  \mathbb{E}\right)  \circ\widetilde{\varphi
}\right)  \otimes\mathrm{id}_{W}%
\]
we are sure that $\mathbb{E}\otimes W$ is a Euclidean $\mathbb{R}$-module.
\end{proof}

\begin{remark}
\label{r3.1}If $\mathbb{E}$ is an $\mathbb{R}$-module, then the first
projection
\[
\pi_{1}:\mathbb{E\times E\rightarrow E}%
\]
is naturally an $\mathbb{R}$-module in the slice category $\mathcal{K}%
/\mathbb{E}$.
\end{remark}

\begin{proposition}
\label{t3.5}If $\mathbb{E}$ is a Euclidean $\mathbb{R}$-module, then the
identification of $\mathbb{E}\otimes\mathcal{W}_{D}$ and $\mathbb{E\times E} $
in Definition \ref{d3.1} together with the commutative diagram
\[%
\begin{array}
[c]{ccc}%
\mathbb{E\times E} & = & \mathbb{E}\otimes\mathcal{W}_{D}\\
\pi_{1}\searrow &  & \swarrow\tau_{\mathcal{W}_{D}}\left(  \mathbb{E}\right)
\\
& \mathbb{E} &
\end{array}
\]
allows us to identify the $\mathbb{R}_{M}$-module structure in Theorem
\ref{t2.3.1}\ and that in Remark \ref{r3.1}.
\end{proposition}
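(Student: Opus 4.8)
The plan is to unwind both $\mathbb{R}_{M}$-module structures on the canonical projection $\tau_{\mathcal{W}_{D}}(\mathbb{E}):\mathbb{E}\otimes\mathcal{W}_{D}\to\mathbb{E}$ and check that they agree on addition and on scalar multiplication separately. Here $M=\mathbb{E}$, and the issue is genuinely one of matching data, not of proving anything deep: both structures live on the same object (once we use the Euclidean identification $\mathbb{E}\otimes\mathcal{W}_{D}\cong\mathbb{E}\times\mathbb{E}$ furnished by Definition \ref{d3.1}), and the commutative triangle in the statement already guarantees that the two bundle projections to $\mathbb{E}$ are identified, so the two module structures are at least structures on the \emph{same} object of $\mathcal{K}/\mathbb{E}$.

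First I would recall explicitly what the structure in Theorem \ref{t2.3.1} is: addition is induced by $\mathrm{id}_{M}\otimes\mathcal{W}_{d\in D\mapsto(d,d)\in D(2)}$ composed with the identification coming from the coproduct $\mathcal{W}_{D(2)}=\mathcal{W}_{D}\otimes_k\mathcal{W}_{D}$ (equivalently $M\otimes\mathcal{W}_{D(2)}$ being the fibered product of two copies of $M\otimes\mathcal{W}_{D}$ over $M$), and scalar multiplication is the $\mathrm{id}_{M}\otimes\mathcal{W}_{(r,d)\in\mathbb{R}\times D\mapsto rd\in D}$ of Definition \ref{d2.3.1}. On the other side, the structure in Remark \ref{r3.1} is the pointwise $\mathbb{R}$-module structure on the first projection $\pi_{1}:\mathbb{E}\times\mathbb{E}\to\mathbb{E}$, i.e. on the second coordinate: addition is $(e,v),(e,w)\mapsto(e,v+w)$ and scalar multiplication is $r,(e,v)\mapsto(e,rv)$.

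The key step is to compute the Euclidean identification $\mathbb{E}\otimes\mathcal{W}_{D}\xrightarrow{\sim}\mathbb{E}\times\mathbb{E}$ concretely enough to transport the operations. By Definition \ref{d3.1} the inverse isomorphism $\mathbb{E}\times\mathbb{E}\to\mathbb{E}\otimes\mathcal{W}_{D}$ is the composite of the exponential transpose of $(r,e,v)\mapsto e+rv$ with $\alpha_{\mathcal{W}_{d\in D\mapsto d\in\mathbb{R}}}(\mathbb{E})$; intuitively it sends $(e,v)$ to ``$e+dv$'' $\in\mathbb{E}\otimes\mathcal{W}_{D}$, with $\tau_{\mathcal{W}_{D}}(\mathbb{E})$ sending this to $e$, which is exactly the content of the displayed triangle. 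Granting that description, transporting the operations of Theorem \ref{t2.3.1} through it is a direct computation in the Kock--Lawvere style: addition becomes $(e+dv)+(e+dw)$, which via $d\mapsto(d,d)$ and then collapsing back along $D(2)\to D$ yields $e+d(v+w)$; and scalar multiplication by $r$ sends $e+dv$ to $e+(rd)v=e+d(rv)$. Both match the pointwise structure on the second coordinate of $\mathbb{E}\times\mathbb{E}$. To make this rigorous rather than heuristic I would express each step as a commuting square of Weil-algebra homomorphisms $\mathcal{W}$-dualized and tensored with $\mathrm{id}_{\mathbb{E}}$, using naturality of $\alpha$ (axioms \eqref{2.1.4}, \eqref{2.1.5}) and the already-established Lemma \ref{t3.3}/Proposition \ref{t3.4} machinery for how $\alpha_{\mathcal{W}_{d\mapsto d}}$ interacts with the module structure on $\mathbb{E}\otimes W$.

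The main obstacle I anticipate is purely bookkeeping: getting the several identifications $M\otimes\mathcal{W}_{\mathbb{R}\times D}=(M\otimes\mathcal{W}_{D})^{\mathbb{R}}$, $\mathcal{W}_{D(2)}=\mathcal{W}_{D}\otimes_k\mathcal{W}_{D}$, and $\mathbb{E}\otimes\mathcal{W}_{D}=\mathbb{E}\times\mathbb{E}$ to line up coherently, and making sure the ``first projection'' on $\mathbb{E}\times\mathbb{E}$ really corresponds under the Euclidean iso to $\tau_{\mathcal{W}_{D}}(\mathbb{E})$ and not to something twisted by the addition. Once the displayed triangle is invoked to pin down the base identification, the remaining verification that the two triples (object, addition, scalar multiplication) coincide is, as the author signals with ``It should be evident,'' a routine diagram chase; I would present it as two commuting diagrams, one for $+$ and one for $\cdot_{\mathbb{R}}$, each reduced to applying $\mathcal{W}$ to an evident identity of $k$-algebra maps and then tensoring with $\mathbb{E}$.
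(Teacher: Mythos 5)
Your plan is correct and matches the paper's own proof: the paper likewise transports the two structures across the Euclidean identification and checks addition and scalar multiplication separately by pasting commuting squares obtained from $\mathcal{W}$-dualized algebra maps tensored with $\mathrm{id}_{\mathbb{E}}$ (the addition part using the identification $\mathbb{E}\otimes\mathcal{W}_{D(2)}\cong\mathbb{E}\times\mathbb{E}\times\mathbb{E}$ realized by the transpose of the two-parameter affine map followed by $\alpha_{\mathcal{W}_{D(2)\rightarrow\mathbb{R}\times\mathbb{R}}}$), exactly the rigorization you sketch. One small correction: $\mathcal{W}_{D(2)}$ is not the coproduct $\mathcal{W}_{D}\otimes_{k}\mathcal{W}_{D}$ (that would be $\mathcal{W}_{D\times D}$) but the pullback $\mathcal{W}_{D}\times_{k}\mathcal{W}_{D}$, which is what your equivalent description of $M\otimes\mathcal{W}_{D(2)}$ as a fibered product over $M$ correctly reflects.
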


\begin{proof}
\begin{enumerate}
\item First we deal with addition. We have
\begin{align}
\mathbb{E}\otimes W_{D(2)}  & =\left(  \mathbb{E}\otimes\mathcal{W}%
_{D}\right)  \times_{M}\left(  \mathbb{E}\otimes\mathcal{W}_{D}\right)
\nonumber\\
& =\left(  \underset{1}{\mathbb{E}}\mathbb{\times}\underset{2}{\mathbb{E}%
}\right)  \times_{M}\left(  \underset{3}{\mathbb{E}}\mathbb{\times}%
\underset{4}{\mathbb{E}}\right)  \nonumber\\
& =\underset{1,3}{\mathbb{E}}\mathbb{\times}\underset{2}{\mathbb{E}%
}\mathbb{\times}\underset{4}{\mathbb{E}}\label{3.5.1}%
\end{align}
where the numbers under $\mathbb{E}$ are given simply so as for the reader to
relate each occurrence of $\mathbb{E}$\ on the last line to the appropriate
occurrence of $\mathbb{E}$\ on the previous line. This isomorphism can be
realized by the composition of the exponential transpose
\begin{equation}
\mathbb{E\times E\times E\rightarrow E}^{\mathbb{R\times R}}\label{3.5.2}%
\end{equation}
of
\begin{align}
\underset{1}{\mathbb{R}}\times\underset{2}{\mathbb{R}}\times\underset
{3}{\mathbb{E}}\mathbb{\times}\underset{4}{\mathbb{E}}\mathbb{\times}%
\underset{5}{\mathbb{E}}  & =\underset{3}{\mathbb{E}}\mathbb{\times}%
\underset{1}{\mathbb{\mathbb{R}}}\mathbb{\times}\underset{4}%
{\mathbb{\mathbb{E}}}\mathbb{\times}\underset{2}{\mathbb{R}}\times\underset
{5}{\mathbb{E}}\mathbb{\,}\underrightarrow{\mathrm{id}_{\mathbb{E}}\times
\cdot_{\mathbb{R},\mathbb{E}}\times\cdot_{\mathbb{R},\mathbb{E}}%
}\,\mathbb{E\times E\times E}\nonumber\\
& \,\underrightarrow{\mathrm{id}_{\mathbb{E}}\times\mathbb{+}_{\mathbb{E}}%
}\,\mathbb{E\times E\,}\underrightarrow{\mathbb{+}_{\mathbb{E}}}%
\,\mathbb{E}\label{3.5.3}%
\end{align}
and
\begin{equation}
\alpha_{\mathcal{W}_{\left(  d_{1},d_{2}\right)  \in D\left(  2\right)
\mapsto\left(  d_{1},d_{2}\right)  \in\mathbb{R}\times\mathbb{R}}}\left(
\mathbb{E}\right)  :\mathbb{E}^{\mathbb{R}\times\mathbb{R}}=\mathbb{E}%
\otimes\mathcal{W}_{\mathbb{R}\times\mathbb{R}}\rightarrow\mathbb{E}%
\otimes\mathcal{W}_{D\left(  2\right)  }\label{3.5.4}%
\end{equation}
in succession, where the numbers under $\mathbb{R}$\ and $\mathbb{E}$\ are
intended for the reader to easily relate their occurrences on the first line
to those on the second line. Therefore, the commutativity of the diagrams
\[%
\begin{array}
[c]{ccc}%
\mathbb{E\times E\times E} & \rightarrow & \mathbb{E}^{\mathbb{R}%
\times\mathbb{R}}=\mathbb{E}\otimes\mathcal{W}_{\mathbb{R}\times\mathbb{R}}\\%
\begin{array}
[c]{c}%
\mathrm{id}_{\mathbb{E}}\times\mathbb{+}_{\mathbb{E}}%
\end{array}
\downarrow &  & \downarrow%
\begin{array}
[c]{c}%
\mathrm{id}_{\mathbb{E}}\otimes\mathcal{W}_{r\in\mathbb{R}\mapsto\left(
r,r\right)  \in\mathbb{R}\times\mathbb{R}}%
\end{array}
\\
\mathbb{E\times E} & \rightarrow & \mathbb{E}^{\mathbb{R}}=\mathbb{E}%
\otimes\mathcal{W}_{\mathbb{R}}%
\end{array}
\]%
\[%
\begin{array}
[c]{ccc}%
\mathbb{E}^{\mathbb{R}\times\mathbb{R}}=\mathbb{E}\otimes\mathcal{W}%
_{\mathbb{R}\times\mathbb{R}} & \rightarrow & \mathbb{E}\otimes\mathcal{W}%
_{D\left(  2\right)  }\\%
\begin{array}
[c]{c}%
\mathrm{id}_{\mathbb{E}}\otimes\mathcal{W}_{r\in\mathbb{R}\mapsto\left(
r,r\right)  \in\mathbb{R}\times\mathbb{R}}%
\end{array}
\downarrow &  & \downarrow%
\begin{array}
[c]{c}%
\mathrm{id}_{\mathbb{E}}\otimes\mathcal{W}_{d\in D\mapsto\left(  d,d\right)
\in D\left(  2\right)  }%
\end{array}
\\
\mathbb{E}^{\mathbb{R}}=\mathbb{E}\otimes\mathcal{W}_{\mathbb{R}} &
\rightarrow & \mathbb{E}\otimes\mathcal{W}_{D}%
\end{array}
\text{,}%
\]
with the morphism
\[
\mathbb{E\times E\times E\rightarrow E}^{\mathbb{R}\times\mathbb{R}}%
\]
being that in (\ref{3.5.2}), the morphism
\[
\mathbb{E\times E\rightarrow E}^{\mathbb{R}}%
\]
being that in (\ref{d3.1.1}), the moriphism%
\[
\mathbb{E}^{\mathbb{R}\times\mathbb{R}}=\mathbb{E}\otimes\mathcal{W}%
_{\mathbb{R}\times\mathbb{R}}\rightarrow\mathbb{E}\otimes\mathcal{W}_{D\left(
2\right)  }%
\]
being the morphism%
\[
\mathrm{id}_{\mathbb{E}}\otimes\mathcal{W}_{\left(  d_{1},d_{2}\right)  \in
D\left(  2\right)  \mapsto\left(  d_{1},d_{2}\right)  \in\mathbb{R}%
\times\mathbb{R}}%
\]
and the morphism%
\[
\mathbb{E}^{\mathbb{R}}=\mathbb{E}\otimes\mathcal{W}_{\mathbb{R}}%
\rightarrow\mathbb{E}\otimes\mathcal{W}_{D}%
\]
being the morphism%
\[
\mathrm{id}_{\mathbb{E}}\otimes\mathcal{W}_{_{d\in D\mapsto d\in\mathbb{R}}}%
\]
implies the commutativity of the diagram
\[%
\begin{array}
[c]{ccc}%
\mathbb{E\times E\times E} & = & \mathbb{E}\otimes\mathcal{W}_{D\left(
2\right)  }\\%
\begin{array}
[c]{c}%
\mathrm{id}_{\mathbb{E}}\times\mathbb{+}_{\mathbb{E}}%
\end{array}
\downarrow &  & \downarrow%
\begin{array}
[c]{c}%
\mathrm{id}_{\mathbb{E}}\otimes\mathcal{W}_{d\in D\mapsto\left(  d,d\right)
\in D\left(  2\right)  }%
\end{array}
\\
\mathbb{E\times E} & = & \mathbb{E}\otimes\mathcal{W}_{D}%
\end{array}
\]
This is no other than the gist of the desired statement.

\item Now we deal with scalar multiplication. The commutativity of the
diagrams
\[%
\begin{array}
[c]{ccc}%
\mathbb{E\times E} & \rightarrow & \mathbb{E}^{\mathbb{R}}=\mathbb{E}%
\otimes\mathcal{W}_{\mathbb{R}}\\
\downarrow &  & \downarrow\mathrm{id}_{\mathbb{E}}\otimes\mathcal{W}_{\left(
r_{1},r_{2}\right)  \in\mathbb{R\times R}\mapsto r_{1}r_{2}\in\mathbb{R}}\\
\left(  \mathbb{E\times E}\right)  ^{\mathbb{R}} & \rightarrow &
\mathbb{E}^{\mathbb{R\times R}}=\mathbb{E}\otimes\mathcal{W}_{\mathbb{R\times
R}}%
\end{array}
\]%
\[%
\begin{array}
[c]{ccc}%
\mathbb{E}^{\mathbb{R}}=\mathbb{E}\otimes\mathcal{W}_{\mathbb{R}} &
\rightarrow & \mathbb{E}\otimes\mathcal{W}_{D}\\
\mathrm{id}_{\mathbb{E}}\otimes\mathcal{W}_{\left(  r_{1},r_{2}\right)
\in\mathbb{R\times R}\mapsto r_{1}r_{2}\in\mathbb{R}}\downarrow &  &
\downarrow\mathrm{id}_{\mathbb{E}}\otimes\mathcal{W}_{\left(  d,r\right)  \in
D\times\mathbb{R}\mapsto dr\in D}\\
\mathbb{E}^{\mathbb{R\times R}}=\mathbb{E}\otimes\mathcal{W}_{\mathbb{R\times
R}} & \rightarrow & \mathbb{E}\otimes\mathcal{W}_{D\times\mathbb{R}}=\left(
\mathbb{E}\otimes\mathcal{W}_{D}\right)  ^{\mathbb{R}}%
\end{array}
\]
with the left vertical arrow in the first diagram
\[
\mathbb{E\times E\rightarrow}\left(  \mathbb{E\times E}\right)  ^{\mathbb{R}}%
\]
being the exponential transpose of
\begin{equation}
\mathbb{R\times E\times E=E\times R\times E\,}\underrightarrow{\mathrm{id}%
_{\mathbb{E}}\times\cdot_{\mathbb{R},\mathbb{E}}}\,\mathbb{E\times
E}\label{3.5.5}%
\end{equation}
the upper horizontal arrow
\[
\mathbb{E\times E\rightarrow E}^{\mathbb{R}}%
\]
in the first diagram being that in (\ref{d3.1.1}), the lower horizontal arrow
\[
\left(  \mathbb{E\times E}\right)  ^{\mathbb{R}}\rightarrow\mathbb{E}%
^{\mathbb{R\times R}}=\mathbb{E}\otimes\mathcal{W}_{\mathbb{R\times R}}%
\]
in the first diagram being that in (\ref{d3.1.1}) exponentiated by
$\mathbb{R}$, the upper horizontal arrow%
\[
\mathbb{E}^{\mathbb{R}}=\mathbb{E}\otimes\mathcal{W}_{\mathbb{R}}%
\rightarrow\mathbb{E}\otimes\mathcal{W}_{D}%
\]
in the second diagram being%
\[
\mathrm{id}_{\mathbb{E}}\otimes\mathcal{W}_{d\in D\mapsto d\in\mathbb{R}}%
\]
and the lower horizontal arrow%
\[
\mathbb{E}^{\mathbb{R\times R}}=\mathbb{E}\otimes\mathcal{W}_{\mathbb{R\times
R}}\rightarrow\mathbb{E}\otimes\mathcal{W}_{D\times\mathbb{R}}=\left(
\mathbb{E}\otimes\mathcal{W}_{D}\right)  ^{\mathbb{R}}%
\]
in the second diagram being%
\[
\mathrm{id}_{\mathbb{E}}\otimes\mathcal{W}_{\left(  d,r\right)  \in
D\times\mathbb{R}\mapsto\left(  d,r\right)  \in\mathbb{R\times R}}%
\]
implies the commutativity of the diagram
\[%
\begin{array}
[c]{ccc}%
\mathbb{E\times E} & = & \mathbb{E}\otimes\mathcal{W}_{D}\\
\downarrow &  & \downarrow\\
\left(  \mathbb{E\times E}\right)  ^{\mathbb{R}} & = & \left(  \mathbb{E}%
\otimes\mathcal{W}_{D}\right)  ^{\mathbb{R}}%
\end{array}
\]
which is the exponential transpose of the commutative diagram
\[%
\begin{array}
[c]{ccc}%
\mathbb{R\times E\times E} & = & \mathbb{R}\times\left(  \mathbb{E}%
\otimes\mathcal{W}_{D}\right)  \\
\downarrow &  & \downarrow\\
\mathbb{E\times E} & = & \mathbb{E}\otimes\mathcal{W}_{D}%
\end{array}
\]
with the left vertical arrow being that in (\ref{3.5.5}) and the right
vertical arrow being the scalar multiplication in Definition \ref{d2.3.1}.This
is no other than the gist of the desired statement.
\end{enumerate}
\end{proof}

It should be apparent that

\begin{lemma}
\label{t3.6}The diagram
\[
\mathcal{W}_{D(2)}\,\underrightarrow{\mathcal{W}_{\left(  d_{1},d_{2}\right)
\in D^{2}\mapsto\left(  d_{1},d_{1}d_{2}\right)  \in D(2)}}\,\mathcal{W}%
_{D^{2}}\,
\begin{array}
[c]{c}%
\underrightarrow{\mathcal{W}_{d\in D\mapsto\left(  0,0\right)  \in D^{2}}}\\
\overrightarrow{\mathcal{W}_{d\in D\mapsto\left(  0,d\right)  \in D^{2}}}%
\end{array}
\,\mathcal{W}_{D}%
\]
is a limit diagram in the category $\mathbf{Weil}_{k}$.
\end{lemma}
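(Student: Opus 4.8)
The plan is to pass through the dual equivalence $\mathcal{W}\dashv\mathcal{D}$ of Section \ref{s2.2}, where the asserted limit diagram in $\mathbf{Weil}_k$ becomes a colimit diagram among the corresponding infinitesimal objects; concretely, one wants to show that
\[
D\,
\begin{array}[c]{c}
\underrightarrow{\;d\mapsto(0,0)\;}\\
\overrightarrow{\;d\mapsto(0,d)\;}
\end{array}
\,D^{2}\,\underrightarrow{\;(d_{1},d_{2})\mapsto(d_{1},d_{1}d_{2})\;}\,D(2)
\]
is a coequalizer diagram of infinitesimal objects, equivalently that the map $D^2 \to D(2)$ identified above exhibits $D(2)$ as the pushout of the two parallel arrows $D \rightrightarrows D^2$. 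Since $\mathcal{W}$ is a contravariant equivalence, it suffices to verify the colimit property on the infinitesimal side and then transport it back; alternatively, and perhaps more cleanly for a direct diagrammatic verification, one checks the limit property directly in $\mathbf{Weil}_k$ by a presentation argument.

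First I would make the presentations explicit: $\mathcal{W}_{D}=k[X]/(X^2)$, $\mathcal{W}_{D^2}=k[X_1,X_2]/(X_1^2,X_2^2)$, and $\mathcal{W}_{D(2)}=k[Y_1,Y_2]/(Y_1^2,Y_2^2,Y_1Y_2)$. The map $\mathcal{W}_{(d_1,d_2)\mapsto(d_1,d_1d_2)}$ sends $Y_1\mapsto X_1$ and $Y_2\mapsto X_1X_2$; the two maps $\mathcal{W}_{D^2}\to\mathcal{W}_{D}$ send $(X_1,X_2)$ to $(0,0)$ and to $(0,X)$ respectively. Then I would describe the putative limit: an element of the limit is a pair $(p,q)$ with $p\in\mathcal{W}_{D(2)}$, $q\in\mathcal{W}_{D^2}$ whose images agree under $\mathcal{W}_{D^2}\to\mathcal{W}_D$ along the two legs, subject to the compatibility that $q$ is the image of $p$. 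The key step is to show that the map $\mathcal{W}_{D(2)}\to \mathcal{W}_{D^2}$, $Y_1\mapsto X_1$, $Y_2\mapsto X_1X_2$, is injective with image exactly the equalizer of the two maps into $\mathcal{W}_D$. An element $a+bX_1+cX_2+eX_1X_2$ of $\mathcal{W}_{D^2}$ maps under the two legs to $a$ and to $a+cX$; these agree iff $c=0$, i.e. iff the element is $a+bX_1+eX_1X_2$, which is visibly the image of $a+bY_1+eY_2$. Injectivity is immediate since $\{1,Y_1,Y_2\}$ maps to the linearly independent set $\{1,X_1,X_1X_2\}$. This establishes the universal property: given any $W$ with compatible maps to $\mathcal{W}_{D^2}$ (equalizing the two legs) and to $\mathcal{W}_{D(2)}$, the map to $\mathcal{W}_{D^2}$ factors uniquely through the identified sub-$k$-algebra, hence through $\mathcal{W}_{D(2)}$, and compatibility with the given map to $\mathcal{W}_{D(2)}$ forces uniqueness.

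There is one subtlety I would be careful about: the diagram as written is a \emph{limit} diagram, so I must confirm which object plays the role of the limit apex. The arrows point $\mathcal{W}_{D(2)}\to\mathcal{W}_{D^2}\rightrightarrows\mathcal{W}_{D}$, so $\mathcal{W}_{D(2)}$ together with the map to $\mathcal{W}_{D^2}$ should be the equalizer of the pair $\mathcal{W}_{D^2}\rightrightarrows\mathcal{W}_{D}$ — except that the ``two'' maps out of $\mathcal{W}_D$ into the diagram (namely $d\mapsto(0,0)$ and $d\mapsto(0,d)$) are both stated, and one of them is the zero map, so the equalizer of a map with the zero map is its kernel-in-the-sense-of-algebras, i.e. the preimage of $\{a : a\text{ constant}\}$. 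The computation above is exactly this, so the limit apex is indeed $\mathcal{W}_{D(2)}$. The main obstacle — really the only place where care is needed — is bookkeeping: making sure the two parallel arrows are assigned to the correct legs and that the composite $\mathcal{W}_{D(2)}\to\mathcal{W}_{D^2}\to\mathcal{W}_D$ is constant for \emph{both} choices (for $d\mapsto(0,0)$ it is obvious; for $d\mapsto(0,d)$ one gets $Y_1\mapsto X_1\mapsto 0$ and $Y_2\mapsto X_1X_2\mapsto 0$ since $X_1\mapsto 0$), after which the universal property follows from the basis computation. Everything else is the routine linear algebra of truncated polynomial rings, and I would present it as such rather than belaboring it.
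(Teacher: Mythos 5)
The paper offers no proof of Lemma \ref{t3.6} (it is introduced with ``It should be apparent that''), and your direct verification by presentations --- identifying $\mathcal{W}_{D(2)}\rightarrow\mathcal{W}_{D^{2}}$, $Y_{1}\mapsto X_{1}$, $Y_{2}\mapsto X_{1}X_{2}$, as injective with image exactly the subalgebra $\left\{ a+bX_{1}+eX_{1}X_{2}\right\}$ on which the two maps to $\mathcal{W}_{D}$ (sending $a+bX_{1}+cX_{2}+eX_{1}X_{2}$ to $a$ and to $a+cX$, respectively) agree --- is precisely the routine equalizer computation the paper leaves to the reader, and it is correct. Two cosmetic points: the first parallel arrow is not ``the zero map'' but $\mathcal{W}$ applied to the zero map of carved spaces, i.e.\ the augmentation followed by the unit (a unital algebra homomorphism is never zero), and one should remark that the equalizer computed in $k$-algebras is itself a Weil algebra, hence is also the equalizer in the full subcategory $\mathbf{Weil}_{k}$; neither point affects the validity of your argument.
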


\begin{theorem}
\label{t3.7}The $\mathbb{R}_{M}$-module
\[
\tau_{\mathcal{W}_{D}}\left(  M\right)  :M\otimes\mathcal{W}_{D}\rightarrow M
\]
is Euclidean with respect to the DG-category $\left(  \mathcal{K}%
/M,\mathbb{R}_{M},\mathbf{T}_{M},\alpha^{M}\right)  $.
\end{theorem}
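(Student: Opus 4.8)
The plan is to write out Definition \ref{d3.1} inside the DG-category $\left(\mathcal{K}/M,\mathbb{R}_M,\mathbf{T}_M,\alpha^M\right)$ and to see that the single isomorphism it demands is produced by Lemma \ref{t3.6} together with the microlinearity of $M$ (which, as in \cite{nishi3}, guarantees that $M\otimes(-)$ carries limit diagrams in $\mathbf{Weil}_k$ to limit diagrams in $\mathcal{K}$). Write $\mathbf{E}$ for the $\mathbb{R}_M$-module $\tau_{\mathcal{W}_D}(M):M\otimes\mathcal{W}_D\rightarrow M$ of Theorem \ref{t2.3.1}. Since $\mathcal{W}_{\mathbb{R}}=k[X]$ and the slice is a DG-category but for (\ref{2.1.0}) and (\ref{2.1.4}), we have $\mathbf{E}^{\mathbb{R}_M}=\mathbf{T}_M^{\mathcal{W}_{\mathbb{R}}}\mathbf{E}$, and Euclideanness of $\mathbf{E}$ means exactly that the composite
\[
\mathbf{E}\times_M\mathbf{E}\longrightarrow\mathbf{T}_M^{\mathcal{W}_{\mathbb{R}}}\mathbf{E}\longrightarrow\mathbf{T}_M^{\mathcal{W}_D}\mathbf{E}
\]
is an isomorphism in $\mathcal{K}/M$, the first arrow being the exponential transpose (taken in $\mathcal{K}/M$) of the morphism $\mathbb{R}_M\times_M\mathbf{E}\times_M\mathbf{E}=\mathbf{E}\times_M\mathbb{R}_M\times_M\mathbf{E}\rightarrow\mathbf{E}\times_M\mathbf{E}\rightarrow\mathbf{E}$ assembled from scalar multiplication and addition as in (\ref{d3.1.2}), and the second being $\alpha^M_{\mathcal{W}_{d\in D\mapsto d\in\mathbb{R}}}(\mathbf{E})$.

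First I would compute the two ends. For the source: $\mathcal{W}_{D(2)}$ is the pullback of two copies of $\mathcal{W}_D$ over $k$ along the augmentation $\mathcal{W}_D\rightarrow k$ (dually, $D(2)$ is the amalgamated sum of two copies of $D$ along $0\in D$), so the microlinearity of $M$ gives $\mathbf{E}\times_M\mathbf{E}=\left(M\otimes\mathcal{W}_D\right)\times_M\left(M\otimes\mathcal{W}_D\right)=M\otimes\mathcal{W}_{D(2)}$. For the target: by item (3) of Proposition \ref{t2.1.1}, $\mathbf{T}_M^{\mathcal{W}_D}\mathbf{E}=\overline{\mathbf{T}}_M^{\mathcal{W}_D}\left(\tau_{\mathcal{W}_D}(M)\right)$ is the equalizer of two morphisms
\[
M\otimes\mathcal{W}_{D^{2}}=\mathbf{T}^{\mathcal{W}_D}\mathbf{T}^{\mathcal{W}_D}M\rightrightarrows\mathbf{T}^{\mathcal{W}_D}M=M\otimes\mathcal{W}_D ,
\]
and unwinding $\tau_{\mathcal{W}_D}$ and $\iota_{\mathcal{W}_D}$ through the duality of Section \ref{s2.2} identifies this parallel pair with $\mathrm{id}_M\otimes(-)$ applied to the pair $\mathcal{W}_{D^{2}}\rightrightarrows\mathcal{W}_D$ appearing in Lemma \ref{t3.6} (the maps induced by $d\in D\mapsto\left(0,d\right)\in D^{2}$ and $d\in D\mapsto\left(0,0\right)\in D^{2}$). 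Since Lemma \ref{t3.6} says precisely that $\mathcal{W}_{D(2)}\rightarrow\mathcal{W}_{D^{2}}\rightrightarrows\mathcal{W}_D$ is a limit diagram in $\mathbf{Weil}_k$, the microlinearity of $M$ now yields $\mathbf{T}_M^{\mathcal{W}_D}\mathbf{E}=M\otimes\mathcal{W}_{D(2)}$, with the monomorphism into $M\otimes\mathcal{W}_{D^{2}}$ being $\mathrm{id}_M\otimes\mathcal{W}_{\left(d_1,d_2\right)\in D^{2}\mapsto\left(d_1,d_1d_2\right)\in D(2)}$; the factor $d_1d_2$ here is exactly the scalar multiplication of Definition \ref{d2.3.1}. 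A parallel computation, using $\mathcal{W}_{\mathbb{R}}\otimes_k\mathcal{W}_D=\mathcal{W}_{\mathbb{R}\times D}$, identifies $\mathbf{T}_M^{\mathcal{W}_{\mathbb{R}}}\mathbf{E}$ with the equalizer of the corresponding pair on $M\otimes\mathcal{W}_{\mathbb{R}\times D}$.

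With both ends now exhibited as $M\otimes\mathcal{W}_{D(2)}$, the remaining step is to chase the displayed composite through these identifications and check that it becomes the identity morphism on $M\otimes\mathcal{W}_{D(2)}$; being the identity it is in particular an isomorphism, and Theorem \ref{t3.7} follows. Concretely, it suffices to verify that the composite, post-composed with the monomorphism $\mathbf{T}_M^{\mathcal{W}_D}\mathbf{E}\hookrightarrow M\otimes\mathcal{W}_{D^{2}}$ above, coincides with that monomorphism itself; this calls on the explicit description of scalar multiplication in Definition \ref{d2.3.1}, the commuting squares used in the proof of Theorem \ref{t2.3.1}, the compatibility law (\ref{2.1.8}) between $\alpha$ and the prolongations $\mathbf{T}^C$, and the identity (\ref{2.1.7}) for $\alpha_\varphi(\mathbb{R})$. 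I expect this bookkeeping to be the main obstacle: none of the individual steps is deep, but one must keep scrupulous track of which $\mathcal{W}_D$-factor is being prolonged as against being the module's own factor, and of how the exponential transpose formed in $\mathcal{K}/M$ interacts with the identifications $\mathbf{T}^B\mathbf{T}^A=\mathbf{T}^{A\otimes_k B}$ — the combinatorics of the factor orderings is where care is really needed.
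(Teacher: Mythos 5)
Your proposal follows essentially the same route as the paper's own proof: apply $M\otimes(-)$ to the limit diagram of Lemma \ref{t3.6} (using microlinearity) to identify $\overline{\mathbf{T}}_{M}^{\mathcal{W}_{D}}\left(\tau_{\mathcal{W}_{D}}\left(M\right)\right)$ with $M\otimes\mathcal{W}_{D(2)}$, identify the fibred product $\left(M\otimes\mathcal{W}_{D}\right)\times_{M}\left(M\otimes\mathcal{W}_{D}\right)$ with the same object, and conclude that the Euclideanness composite is an isomorphism. In fact you spell out more than the paper does (the explicit parallel pair, and the remaining check that the composite becomes the identity under these identifications, which the paper leaves implicit in ``the desired conclusion follows''), so the proposal is correct and matches the paper's argument.
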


\begin{proof}
By Lemma \ref{t3.6}\ we have the limit diagram
\begin{align*}
& M\otimes\mathcal{W}_{D(2)}\,\underrightarrow{\mathrm{id}_{M}\otimes
\mathcal{W}_{\left(  d_{1},d_{2}\right)  \in D^{2}\mapsto\left(  d_{1}%
,d_{1}d_{2}\right)  \in D(2)}}\,\\
& M\otimes\mathcal{W}_{D^{2}}\,%
\begin{array}
[c]{c}%
\underrightarrow{\mathrm{id}_{M}\otimes\mathcal{W}_{d\in D\mapsto\left(
0,0\right)  \in D^{2}}}\\
\overrightarrow{\mathrm{id}_{M}\otimes\mathcal{W}_{d\in D\mapsto\left(
0,d\right)  \in D^{2}}}%
\end{array}
\,M\otimes\mathcal{W}_{D}%
\end{align*}
Therefore we have
\[
M\otimes\mathcal{W}_{D(2)}=\overline{\mathbf{T}}_{M}^{\mathcal{W}_{D}}\left(
\tau_{\mathcal{W}_{D}}\left(  M\right)  \right)
\]
while we have
\[
M\otimes\mathcal{W}_{D(2)}=\left(  M\otimes\mathcal{W}_{D}\right)  \times
_{M}\left(  M\otimes\mathcal{W}_{D}\right)
\]
Therefore the desired conclusion follows.
\end{proof}

\section{\label{s4}Differential Forms}

Let $\mathbb{E}$ be a Euclidean $\mathbb{R}$-module which is microlinear and
Weil exponentiable.

\begin{definition}
(\underline{Differential Forms with values in $\mathbb{E}$}) We denote by
$\Omega^{n}(M;\mathbb{E})$ the intersection of all the following equalizers:

\begin{enumerate}
\item the equalizer of the exponential transpose
\[
\mathbb{E}^{M\otimes\mathcal{W}_{D^{n}}}\rightarrow\mathbb{E}^{\mathbb{R}%
\times\left(  M\otimes\mathcal{W}_{D^{n}}\right)  }%
\]
of the composition of
\[
\mathbb{E}^{M\otimes\mathcal{W}_{D^{n}}}\times\left(  \mathbb{R}\times\left(
M\otimes\mathcal{W}_{D^{n}}\right)  \right)  \,\underrightarrow{\mathrm{id}%
_{\mathbb{E}^{M\otimes\mathcal{W}_{D^{n}}}}\times\left(  \underset{i}{\cdot
}\right)  _{M\otimes\mathcal{W}_{D^{n}}}^{\mathbb{R}}}\,\mathbb{E}%
^{M\otimes\mathcal{W}_{D^{n}}}\times\left(  M\otimes\mathcal{W}_{D^{n}%
}\right)
\]
and
\[
\mathbb{E}^{M\otimes\mathcal{W}_{D^{n}}}\times\left(  M\otimes\mathcal{W}%
_{D^{n}}\right)  \,\underrightarrow{\mathrm{ev}}\,\mathbb{E}%
\]
in succession and the exponential transpose
\[
\mathbb{E}^{M\otimes\mathcal{W}_{D^{n}}}\rightarrow\mathbb{E}^{\mathbb{R}%
\times\left(  M\otimes\mathcal{W}_{D^{n}}\right)  }%
\]
of the composition of
\begin{align*}
& \mathbb{E}^{M\otimes\mathcal{W}_{D^{n}}}\times\left(  \mathbb{R}%
\times\left(  M\otimes\mathcal{W}_{D^{n}}\right)  \right) \\
& =\mathbb{R}\times\left(  \mathbb{E}^{M\otimes\mathcal{W}_{D^{n}}}%
\times\left(  M\otimes\mathcal{W}_{D^{n}}\right)  \right)  \,\underrightarrow
{\mathrm{id}_{\mathbb{R}}\times\mathrm{ev}}\,\mathbb{R}\times\mathbb{E}%
\end{align*}
and the scalar multiplication
\[
\mathbb{R}\times\mathbb{E\,\,}\underrightarrow{\mathbb{\cdot}_{\mathbb{E}%
}^{\mathbb{R}}}\,\mathbb{E}%
\]
in succession, where $i$\ ranges over the natural numbers from $1$\ to $n$,
and the exponential transpose of
\[
\left(  \underset{i}{\cdot}\right)  _{M\otimes\mathcal{W}_{D^{n}}}%
^{\mathbb{R}}:\mathbb{R}\times\left(  M\otimes\mathcal{W}_{D^{n}}\right)
\rightarrow M\otimes\mathcal{W}_{D^{n}}%
\]
is
\begin{align*}
& \alpha_{\mathcal{W}_{\left(  a,d_{1},...,d_{n}\right)  \in\mathbb{R}\times
D^{n}\rightarrow\left(  d_{1},...,ad_{i},...,d_{n}\right)  \in D^{n}}}\left(
M\right) \\
& :M\otimes\mathcal{W}_{D^{n}}\rightarrow\left(  M\otimes\mathcal{W}_{D^{n}%
}\right)  ^{\mathbb{R}}=\left(  M\otimes\mathcal{W}_{D^{n}}\right)
\otimes\mathcal{W}_{\mathbb{R}}\\
& =M\otimes\mathcal{W}_{D^{n}\times\mathbb{R}}=M\otimes\mathcal{W}%
_{\mathbb{R}\times D^{n}}%
\end{align*}

\item the equalizer of the exponential transpose
\[
\mathbb{E}^{M\otimes\mathcal{W}_{D^{n}}}\rightarrow\mathbb{E}^{M\otimes
\mathcal{W}_{D^{n}}}%
\]
of the composition of
\[
\mathbb{E}^{M\otimes\mathcal{W}_{D^{n}}}\times\left(  M\otimes\mathcal{W}%
_{D^{n}}\right)  \,\underrightarrow{\mathrm{id}_{\mathbb{E}^{M\otimes
\mathcal{W}_{D^{n}}}}\times\left(  \cdot^{\sigma}\right)  _{M\otimes
\mathcal{W}_{D^{n}}}}\,\mathbb{E}^{M\otimes\mathcal{W}_{D^{n}}}\times\left(
M\otimes\mathcal{W}_{D^{n}}\right)
\]
and
\[
\mathbb{E}^{M\otimes\mathcal{W}_{D^{n}}}\times\left(  M\otimes\mathcal{W}%
_{D^{n}}\right)  \,\underrightarrow{\mathrm{ev}}\,\mathbb{E}%
\]
in succession and the exponential transpose
\[
\mathbb{E}^{M\otimes\mathcal{W}_{D^{n}}}\rightarrow\mathbb{E}^{M\otimes
\mathcal{W}_{D^{n}}}%
\]
of the composition of
\[
\mathbb{E}^{M\otimes\mathcal{W}_{D^{n}}}\times\left(  M\otimes\mathcal{W}%
_{D^{n}}\right)  \,\underrightarrow{\mathrm{ev}}\,\mathbb{E}%
\]
and
\[
\mathbb{E\,}\underrightarrow{\left(  \epsilon_{\sigma}\right)
\mathbb{_{\mathbb{E}}}}\,\mathbb{E}%
\]
in succession, where $\sigma$\ ranges over all the permutation of the set
$\left\{  1,...,n\right\}  $, $\epsilon_{\sigma}$ is the signature of $\sigma
$, $\left(  \epsilon_{\sigma}\right)  \mathbb{_{\mathbb{E}}}$ is the scalar
multiplication by $\epsilon_{\sigma}$, and
\[
\left(  \cdot^{\sigma}\right)  _{M\otimes\mathcal{W}_{D^{n}}}:M\otimes
\mathcal{W}_{D^{n}}\rightarrow M\otimes\mathcal{W}_{D^{n}}%
\]
is
\[
\alpha_{\mathcal{W}_{\left(  d_{1},...,d_{n}\right)  \in D^{n}\rightarrow
\left(  d_{\sigma\left(  1\right)  },...,d_{\sigma\left(  n\right)  }\right)
\in D^{n}}}\left(  M\right)
\]

\end{enumerate}
\end{definition}

\begin{definition}
(\underline{Infinitesimal Integration of Differential Forms}) We define a
morphism
\[
\int_{M,\mathbb{E}}^{n}:\left(  M\otimes\mathcal{W}_{D^{n}}\right)
\times\Omega^{n}(M;\mathbb{E})\rightarrow\mathbb{E}%
\]
in $\mathcal{K}$ to be the composition of
\[
\mathrm{id}_{M\otimes\mathcal{W}_{D^{n}}}\times i_{\Omega^{n}(M;\mathbb{E}%
)\rightarrow\mathbb{E}^{M\otimes\mathcal{W}_{D^{n}}}}:\left(  M\otimes
\mathcal{W}_{D^{n}}\right)  \times\Omega^{n}(M;\mathbb{E})\rightarrow\left(
M\otimes\mathcal{W}_{D^{n}}\right)  \times\mathbb{E}^{M\otimes\mathcal{W}%
_{D^{n}}}%
\]
and
\[
\mathrm{ev}:\left(  M\otimes\mathcal{W}_{D^{n}}\right)  \times\mathbb{E}%
^{M\otimes\mathcal{W}_{D^{n}}}\rightarrow\mathbb{E}%
\]
in succession, where
\[
i_{\Omega^{n}(M;\mathbb{E})\rightarrow\mathbb{E}^{M\otimes\mathcal{W}_{D^{n}}%
}}:\Omega^{n}(M;\mathbb{E})\rightarrow\mathbb{E}^{M\otimes\mathcal{W}_{D^{n}}}%
\]
is the canonical injection.
\end{definition}

\begin{remark}
We should point out that the orthodox definition of the infinitesimal
integration of differential forms in synthetic differential geometry, such as
seen in Chapter 4 of \cite{lav}, is unnecessarily decorated with redundant
fringes. Therein, it is defined as a mapping
\[
\left(  M\otimes\mathcal{W}_{D^{n}}\right)  \times\Omega^{n}(M;\mathbb{E}%
)\rightarrow\mathbb{E}\otimes\mathcal{W}_{D^{n}}%
\]
where the mapping factors through the canonical injection
\[
\hom\left(  \mathbb{E}\otimes\mathcal{W}_{D^{n}}\right)  \rightarrow
\mathbb{E}\otimes\mathcal{W}_{D^{n}}%
\]
with $\hom\left(  \mathbb{E}\otimes\mathcal{W}_{D^{n}}\right)  $ being the
homogeneous subobject of $\mathbb{E}\otimes\mathcal{W}_{D^{n}}$. Since
$\hom\left(  \mathbb{E}\otimes\mathcal{W}_{D^{n}}\right)  $ is canonically
isomorphic to $\mathbb{E}$, such an unnecessarily decoration is to be averted.
\end{remark}

It is trivial to see that

\begin{proposition}
\label{t4.1}The morphism
\[
\int_{M,\mathbb{E}}^{n}:\left(  M\otimes\mathcal{W}_{D^{n}}\right)
\times\Omega^{n}(M;\mathbb{E})\rightarrow\mathbb{E}%
\]
satisfies the following properties:

\begin{enumerate}
\item The composition of morphisms
\[
\left(  \underset{i}{\cdot}\right)  _{M\otimes\mathcal{W}_{D^{n}}}%
^{\mathbb{R}}\times\mathrm{id}_{\Omega^{n}(M;\mathbb{E})}:\mathbb{R}%
\times\left(  M\otimes\mathcal{W}_{D^{n}}\right)  \times\Omega^{n}%
(M;\mathbb{E})\rightarrow\left(  M\otimes\mathcal{W}_{D^{n}}\right)
\times\Omega^{n}(M;\mathbb{E})
\]
and
\[
\int_{M,\mathbb{E}}^{n}:\left(  M\otimes\mathcal{W}_{D^{n}}\right)
\times\Omega^{n}(M;\mathbb{E})\rightarrow\mathbb{E}%
\]
in succession is equal to the composition of morphisms
\[
\mathrm{id}_{\mathbb{R}}\times\int_{M,\mathbb{E}}^{n}:\mathbb{R}\times\left(
M\otimes\mathcal{W}_{D^{n}}\right)  \times\Omega^{n}(M;\mathbb{E}%
)\rightarrow\mathbb{R}\times\mathbb{E}%
\]
and
\[
\mathbb{\cdot}_{\mathbb{E}}^{\mathbb{R}}:\mathbb{R}\times\mathbb{E}%
\,\rightarrow\mathbb{E}%
\]
in succession.

\item The composition of morphisms
\[
\left(  \cdot^{\sigma}\right)  _{M\otimes\mathcal{W}_{D^{n}}}\times
\mathrm{id}_{\Omega^{n}(M;\mathbb{E})}:\left(  M\otimes\mathcal{W}_{D^{n}%
}\right)  \times\Omega^{n}(M;\mathbb{E})\rightarrow\left(  M\otimes
\mathcal{W}_{D^{n}}\right)  \times\Omega^{n}(M;\mathbb{E})
\]
and
\[
\int_{M,\mathbb{E}}^{n}:\left(  M\otimes\mathcal{W}_{D^{n}}\right)
\times\Omega^{n}(M;\mathbb{E})\rightarrow\mathbb{E}%
\]
in succession is equal to the composition of morphisms
\[
\int_{M,\mathbb{E}}^{n}:\left(  M\otimes\mathcal{W}_{D^{n}}\right)
\times\Omega^{n}(M;\mathbb{E})\rightarrow\mathbb{E}%
\]
and
\[
\mathbb{E}\,\underrightarrow{\left(  \epsilon_{\sigma}\right)  _{\mathbb{E}%
\otimes\mathcal{W}_{D^{n}}}}\,\mathbb{E}%
\]
in succession.
\end{enumerate}
\end{proposition}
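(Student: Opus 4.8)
The plan is to observe that both asserted identities are just the defining equalizer conditions of $\Omega^{n}(M;\mathbb{E})$ read through the exponential adjunction $(-)\times(M\otimes\mathcal{W}_{D^{n}})\dashv(-)^{M\otimes\mathcal{W}_{D^{n}}}$. Write $j:\Omega^{n}(M;\mathbb{E})\rightarrow\mathbb{E}^{M\otimes\mathcal{W}_{D^{n}}}$ for the canonical injection, so that by construction $\int_{M,\mathbb{E}}^{n}=\mathrm{ev}\circ(\mathrm{id}_{M\otimes\mathcal{W}_{D^{n}}}\times j)$. Since $\Omega^{n}(M;\mathbb{E})$ is the intersection of the equalizers appearing in its definition, $j$ factors through each of them; in particular $j$ equalizes each of the two displayed pairs of morphisms out of $\mathbb{E}^{M\otimes\mathcal{W}_{D^{n}}}$ that occur in clauses~1 and~2 of that definition.

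For property~1 I would fix $i\in\{1,\dots,n\}$ and start from the clause~1 equalizer identity, that is, from the fact that post-composing $j$ with either of the two morphisms $\mathbb{E}^{M\otimes\mathcal{W}_{D^{n}}}\rightarrow\mathbb{E}^{\mathbb{R}\times(M\otimes\mathcal{W}_{D^{n}})}$ of that clause yields the same morphism $\Omega^{n}(M;\mathbb{E})\rightarrow\mathbb{E}^{\mathbb{R}\times(M\otimes\mathcal{W}_{D^{n}})}$. Transposing this equality across the adjunction --- equivalently, post-composing with $\mathrm{ev}$ after pairing with the identity of $\mathbb{R}\times(M\otimes\mathcal{W}_{D^{n}})$ --- and unwinding the two composites that define those morphisms turns it into
\[
\int_{M,\mathbb{E}}^{n}\circ\bigl((\underset{i}{\cdot})_{M\otimes\mathcal{W}_{D^{n}}}^{\mathbb{R}}\times\mathrm{id}_{\Omega^{n}(M;\mathbb{E})}\bigr)=\cdot_{\mathbb{E}}^{\mathbb{R}}\circ\bigl(\mathrm{id}_{\mathbb{R}}\times\int_{M,\mathbb{E}}^{n}\bigr),
\]
which is exactly property~1; letting $i$ range over $1,\dots,n$ gives all of its instances. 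The only care needed here is for the canonical braiding $\mathbb{R}\times(M\otimes\mathcal{W}_{D^{n}})\times\Omega^{n}(M;\mathbb{E})\cong\Omega^{n}(M;\mathbb{E})\times(\mathbb{R}\times(M\otimes\mathcal{W}_{D^{n}}))$, which clause~1 already absorbs into its notation.

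Property~2 is obtained in the same way, starting instead from the clause~2 equalizer identity for a fixed permutation $\sigma$ of $\{1,\dots,n\}$; transposing it and unwinding yields
\[
\int_{M,\mathbb{E}}^{n}\circ\bigl((\cdot^{\sigma})_{M\otimes\mathcal{W}_{D^{n}}}\times\mathrm{id}_{\Omega^{n}(M;\mathbb{E})}\bigr)=(\epsilon_{\sigma})_{\mathbb{E}\otimes\mathcal{W}_{D^{n}}}\circ\int_{M,\mathbb{E}}^{n},
\]
where $(\epsilon_{\sigma})_{\mathbb{E}\otimes\mathcal{W}_{D^{n}}}$ denotes, as in clause~2, scalar multiplication by the signature $\epsilon_{\sigma}$; ranging over all $\sigma$ completes this part.

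There is essentially no obstacle: the Euclidean, microlinear and Weil exponentiable hypotheses on $\mathbb{E}$ and $M$ play no part here, and the whole content is the adjunction between finite products and exponentials together with the fact that a map into an intersection of equalizers equalizes each member. The single point that will demand a moment's attention is the bookkeeping of the various canonical rearrangements of product factors, so that the transposed composites line up on the nose with the composites written in the statement; this is precisely why the author calls it trivial.
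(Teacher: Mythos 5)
Your proposal is correct and is exactly the argument the paper intends: the paper offers no proof beyond "it is trivial to see that," and the content is precisely that the canonical injection of $\Omega^{n}(M;\mathbb{E})$ into $\mathbb{E}^{M\otimes\mathcal{W}_{D^{n}}}$ equalizes each defining pair, which upon exponential transposition yields properties 1 and 2 verbatim. Your bookkeeping of the product rearrangements and the range of $i$ and $\sigma$ matches the definition, so nothing is missing.
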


As should have been expected, we have

\begin{theorem}
\label{t4.2}(\underline{The Fundamental Theorem on Differential Forms}) Given
an object $X$\ in $\mathcal{K}$ and a morphism
\begin{equation}
\varphi:\left(  M\otimes\mathcal{W}_{D^{n}}\right)  \times X\rightarrow
\mathbb{E}\otimes\mathcal{W}_{D^{n}}\text{,}\label{4.2.1}%
\end{equation}
if the morphism (\ref{4.2.1}) satisfies the two conditions in Proposition
\ref{t4.1} with $\Omega^{n}(M;\mathbb{E})$ replaced by $X$ and $\int
_{M,\mathbb{E}}^{n}$\ replaced by $\varphi$, then there exists a unique
morphism
\[
\widehat{\varphi}:X\rightarrow\Omega^{n}(M;\mathbb{E})
\]
such that $\varphi$\ is equal to the composition of morphisms
\[
\mathrm{id}_{M\otimes\mathcal{W}_{D^{n}}}\times\widehat{\varphi}:\left(
M\otimes\mathcal{W}_{D^{n}}\right)  \times X\,\rightarrow\left(
M\otimes\mathcal{W}_{D^{n}}\right)  \times\Omega^{n}(M;\mathbb{E})
\]
and
\[
\int_{M,\mathbb{E}}^{n}:\left(  M\otimes\mathcal{W}_{D^{n}}\right)
\times\Omega^{n}(M;\mathbb{E})\rightarrow\mathbb{E}%
\]
in succession.
\end{theorem}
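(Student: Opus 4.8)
The proof will be purely formal: it uses only the cartesian closedness of $\mathcal{K}$, the fact that $\Omega^{n}(M;\mathbb{E})$ is by construction the intersection of a finite family of equalizers inside $\mathbb{E}^{M\otimes\mathcal{W}_{D^{n}}}$, and the observation that $\int_{M,\mathbb{E}}^{n}$ is literally the evaluation $\mathrm{ev}$ precomposed with the canonical injection $i:\Omega^{n}(M;\mathbb{E})\rightarrow\mathbb{E}^{M\otimes\mathcal{W}_{D^{n}}}$. In particular, neither microlinearity nor Euclideanness of $\mathbb{E}$ is invoked here. The plan is to pass to the exponential transpose of $\varphi$ and show that the two conditions of Proposition \ref{t4.1}, once transposed, are precisely the assertion that this transpose factors through $\Omega^{n}(M;\mathbb{E})$.

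First I would let $\psi:X\rightarrow\mathbb{E}^{M\otimes\mathcal{W}_{D^{n}}}$ be the exponential transpose of $\varphi$, so that $\varphi=\mathrm{ev}\circ(\mathrm{id}_{M\otimes\mathcal{W}_{D^{n}}}\times\psi)$. Suppose it is already known that $\psi$ factors as $\psi=i\circ\widehat{\varphi}$ for some $\widehat{\varphi}:X\rightarrow\Omega^{n}(M;\mathbb{E})$. Then $\int_{M,\mathbb{E}}^{n}\circ(\mathrm{id}_{M\otimes\mathcal{W}_{D^{n}}}\times\widehat{\varphi})=\mathrm{ev}\circ(\mathrm{id}\times i)\circ(\mathrm{id}\times\widehat{\varphi})=\mathrm{ev}\circ(\mathrm{id}\times\psi)=\varphi$, which is exactly the asserted factorization; and $\widehat{\varphi}$ is unique, since $i$ is a monomorphism and the exponential transpose is a bijection. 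Thus the whole theorem reduces to the claim that $\varphi$ satisfies the two conditions of Proposition \ref{t4.1} (read with $X$ in place of $\Omega^{n}(M;\mathbb{E})$ and $\varphi$ in place of $\int_{M,\mathbb{E}}^{n}$) if and only if $\psi$ factors through $\Omega^{n}(M;\mathbb{E})$.

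To establish this equivalence I would treat the two families of conditions separately and match them, one equalizer at a time, with the two clauses defining $\Omega^{n}(M;\mathbb{E})$. Fixing $i\in\{1,\dots,n\}$, the equation in Proposition \ref{t4.1}(1) is an equality of two morphisms $\mathbb{R}\times(M\otimes\mathcal{W}_{D^{n}})\times X\rightarrow\mathbb{E}$; currying out the factor $M\otimes\mathcal{W}_{D^{n}}$ while keeping $\mathbb{R}$ and $X$, and then the remaining $\mathbb{R}$, and using the identification $\mathbb{E}^{\mathbb{R}\times(M\otimes\mathcal{W}_{D^{n}})}=(\mathbb{E}^{M\otimes\mathcal{W}_{D^{n}}})^{\mathbb{R}}$ together with the compatibility of transposition with products, this equation becomes exactly the statement that $\psi$ is equalized by the parallel pair of morphisms $\mathbb{E}^{M\otimes\mathcal{W}_{D^{n}}}\rightarrow\mathbb{E}^{\mathbb{R}\times(M\otimes\mathcal{W}_{D^{n}})}$ appearing in clause (1), for that value of $i$. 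Likewise, fixing a permutation $\sigma$ of $\{1,\dots,n\}$, currying out $M\otimes\mathcal{W}_{D^{n}}$ turns the equation in Proposition \ref{t4.1}(2) into the statement that $\psi$ is equalized by the parallel pair $\mathbb{E}^{M\otimes\mathcal{W}_{D^{n}}}\rightarrow\mathbb{E}^{M\otimes\mathcal{W}_{D^{n}}}$ of clause (2) attached to $\sigma$. Since $\Omega^{n}(M;\mathbb{E})$ is the intersection of all these equalizers — a finite limit, available because $\mathcal{K}$ is left exact — $\psi$ factors through $\Omega^{n}(M;\mathbb{E})$ exactly when $\varphi$ satisfies both families of conditions, and the theorem follows.

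The only real difficulty is bookkeeping: one must keep careful track of which slot is being curried and repeatedly invoke the identifications $\mathbb{E}^{A\times B}=(\mathbb{E}^{A})^{B}$ together with the naturality of $\mathrm{ev}$ and of exponential transpose with respect to the structural morphisms $(\underset{i}{\cdot})_{M\otimes\mathcal{W}_{D^{n}}}^{\mathbb{R}}$, $(\cdot^{\sigma})_{M\otimes\mathcal{W}_{D^{n}}}$, $\cdot_{\mathbb{E}}^{\mathbb{R}}$ and $(\epsilon_{\sigma})_{\mathbb{E}}$. No genuinely new geometric input beyond Proposition \ref{t4.1} and cartesian closedness is needed; this is why the theorem, though fundamental, has an essentially formal proof.
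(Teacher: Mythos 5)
Your proposal is correct and follows essentially the same route as the paper: the paper's proof likewise takes the exponential transpose of $\varphi$, notes that the two conditions of Proposition \ref{t4.1} are precisely what makes it factor through the intersection of equalizers defining $\Omega^{n}(M;\mathbb{E})$, and leaves the currying bookkeeping and the uniqueness (via the canonical monomorphism) to the reader. You simply spell out those details explicitly, and you read the codomain of $\varphi$ as $\mathbb{E}$, which is the reading consistent with Proposition \ref{t4.1} (the $\mathbb{E}\otimes\mathcal{W}_{D^{n}}$ in (\ref{4.2.1}) appears to be a misprint that the paper's own proof also glosses over).
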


\begin{proof}
The theorem follows rather directly from the universal construction of
$\Omega^{n}(M;\mathbb{E})$. Take the exponential transpose
\[
\widetilde{\varphi}:X\rightarrow\left(  \mathbb{E}\otimes W_{D^{n}}\right)
^{M\otimes W_{D^{n}}}%
\]
of (\ref{4.2.1}), which factors, by the two conditions on $\varphi$, into a
morphism
\[
\widehat{\varphi}:X\rightarrow\Omega^{n}(M;\mathbb{E})
\]
followed by the canonical monomorphism
\[
\Omega^{n}(M;\mathbb{E})\rightarrow\mathbb{E}^{M\otimes W_{D^{n}}}%
\]
It is not difficult to see that the above $\widehat{\varphi}$\ is the desired
unique morphism in the theorem. The details can safely be left to the reader.
\end{proof}

\section{\label{s5}The Exterior Differentiation}

\begin{definition}
Given natural numbers $n,i$ with $1\leq i\leq n+1$, we define a morphism
\[
\left(  \int_{M,\mathbb{E}}^{n}\right)  _{i}:\left(  M\otimes\mathcal{W}%
_{D^{n+1}}\right)  \times\Omega^{n}(M;\mathbb{E})\rightarrow\mathbb{E}%
\]
in $\mathcal{K}$ to be
\begin{align*}
& \left(  M\otimes\mathcal{W}_{D^{n+1}}\right)  \times\Omega^{n}%
(M;\mathbb{E})\\
& \,\underrightarrow{\left(  \mathrm{id}_{M}\otimes\mathcal{W}_{(d_{1}%
,...,d_{n+1})\in D^{n+1}\mapsto(d_{1},...,d_{i-1},d_{i+1},...,d_{n+1}%
,d_{i})\in D^{n+1}}\right)  \times\mathrm{id}_{\Omega^{n}(M;\mathbb{E})}}\\
& \left(  M\otimes\mathcal{W}_{D^{n+1}}\right)  \times\Omega^{n}%
(M;\mathbb{E})\\
& =\left(  M\otimes\mathcal{W}_{D^{n+1}}\right)  \times\left(  \Omega
^{n}(M;\mathbb{E})\otimes k\right) \\
& \,\underrightarrow{\mathrm{id}_{M\otimes\mathcal{W}_{D^{n+1}}}\times\left(
\mathrm{id}_{\Omega^{n}(M;\mathbb{E})}\otimes\left(  k\rightarrow k[X]/\left(
X^{2}\right)  =\mathcal{W}_{D}\right)  \right)  }\,\\
& \left(  M\otimes\mathcal{W}_{D^{n+1}}\right)  \times\left(  \Omega
^{n}(M;\mathbb{E})\otimes\mathcal{W}_{D}\right) \\
& =\left(  \left(  M\otimes\mathcal{W}_{D^{n}}\right)  \otimes\mathcal{W}%
_{D}\right)  \times\left(  \Omega^{n}(M;\mathbb{E})\otimes\mathcal{W}%
_{D}\right) \\
& =\left(  \left(  M\otimes\mathcal{W}_{D^{n}}\right)  \times\Omega
^{n}(M;\mathbb{E})\right)  \otimes\mathcal{W}_{D}\\
& \,\underrightarrow{\int_{M,\mathbb{E}}^{n}\otimes\mathrm{id}_{\mathcal{W}%
_{D}}}\,\mathbb{E\otimes}\mathcal{W}_{D}\\
& =\mathbb{E\otimes E\rightarrow E}%
\end{align*}
where the last morphism
\[
\mathbb{E\otimes E\rightarrow E}%
\]
is the second projection, and
\[
k\rightarrow k[X]/\left(  X^{2}\right)  =\mathcal{W}_{D}%
\]
is the canonical morphism.
\end{definition}

\begin{notation}
We denote by
\[
\left(  \partial_{i}^{n+1}\right)  _{M\otimes\mathcal{W}_{D^{n+1}}}%
\]
the morphism
\[
\mathrm{id}_{M}\otimes\mathcal{W}_{(d_{1},...,d_{n+1})\in D^{n+1}\mapsto
(d_{1},...,d_{i-1},d_{i+1},...,d_{n+1},d_{i})\in D^{n+1}}%
\]

\end{notation}

In order to establish the fundamental theorem on exterior differentiation, we
need two lemmas, which go as follows:

\begin{lemma}
\label{t5.1}The composition of morphisms
\[
\left(  \underset{j}{\cdot}\right)  _{M\otimes\mathcal{W}_{D^{n}}}%
^{\mathbb{R}}\times\mathrm{id}_{\Omega^{n}(M;\mathbb{E})}:\mathbb{R}%
\times\left(  M\otimes\mathcal{W}_{D^{n+1}}\right)  \times\Omega
^{n}(M;\mathbb{E})\rightarrow\left(  M\otimes\mathcal{W}_{D^{n+1}}\right)
\times\Omega^{n}(M;\mathbb{E})
\]
and
\[
\left(  \int_{M,\mathbb{E}}^{n}\right)  _{i}:\left(  M\otimes\mathcal{W}%
_{D^{n+1}}\right)  \times\Omega^{n}(M;\mathbb{E})\rightarrow\mathbb{E}%
\]
in succession is equal to the composition of morphisms
\[
\mathrm{id}_{\mathbb{R}}\times\left(  \int_{M,\mathbb{E}}^{n}\right)
_{i}:\mathbb{R}\times\left(  M\otimes\mathcal{W}_{D^{n+1}}\right)
\times\Omega^{n}(M;\mathbb{E})\rightarrow\mathbb{R}\times\mathbb{E}%
\]
and
\[
\mathbb{\cdot}_{\mathbb{E}}^{\mathbb{R}}:\mathbb{R}\times\mathbb{E}%
\,\rightarrow\mathbb{E}%
\]
in succession.
\end{lemma}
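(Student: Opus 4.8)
The plan is to reduce Lemma \ref{t5.1} to the first property in Proposition \ref{t4.1}, namely $\mathbb{R}$-linearity of $\int_{M,\mathbb{E}}^{n}$ in the $i$-th infinitesimal slot, by carefully tracking what the inserted reindexing map $\left(\partial_{i}^{n+1}\right)_{M\otimes\mathcal{W}_{D^{n+1}}}$ does to scalar multiplication. Recall that $\left(\int_{M,\mathbb{E}}^{n}\right)_{i}$ is built as $\left(\partial_{i}^{n+1}\right)_{M\otimes\mathcal{W}_{D^{n+1}}}$ followed by the ``bump'' $k\to\mathcal{W}_{D}$ on the $\Omega^{n}$-factor, followed by $\int_{M,\mathbb{E}}^{n}\otimes\mathrm{id}_{\mathcal{W}_{D}}$, followed by the second projection $\mathbb{E}\otimes\mathcal{W}_{D}=\mathbb{E}\times\mathbb{E}\to\mathbb{E}$. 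The index $j$ in the statement must be an index among $\{1,\dots,n\}$, so that after the permutation $(d_{1},\dots,d_{n+1})\mapsto(d_{1},\dots,d_{i-1},d_{i+1},\dots,d_{n+1},d_{i})$ the $j$-th coordinate (out of the first $n$ of the \emph{source}) lands among the first $n$ coordinates of the target, hence inside the domain of $\int_{M,\mathbb{E}}^{n}$ and \emph{not} in the trailing $\mathcal{W}_{D}$-slot that is being tensored along.

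First I would establish the key commutation: scalar multiplication $\left(\underset{j}{\cdot}\right)_{M\otimes\mathcal{W}_{D^{n+1}}}^{\mathbb{R}}$ in the $j$-th slot of $M\otimes\mathcal{W}_{D^{n+1}}$ commutes with $\left(\partial_{i}^{n+1}\right)_{M\otimes\mathcal{W}_{D^{n+1}}}$ up to replacing it with scalar multiplication in the slot to which $j$ is carried by the permutation. Concretely, both $\left(\underset{j}{\cdot}\right)^{\mathbb{R}}$ and $\left(\partial_{i}^{n+1}\right)$ are of the form $\mathrm{id}_{M}\otimes\mathcal{W}_{f}$ (or $\mathrm{id}_{M}\otimes\alpha_{\mathcal{W}_{f}}$) for explicit algebra maps $f$ between (prolongations of) $\mathcal{W}_{D^{n+1}}$, so the claimed identity is just the commutativity of a square of algebra homomorphisms in $\mathbf{PFP}\mathbb{T}_{k}$ — the permutation conjugates ``scale coordinate $j$'' into ``scale coordinate $\sigma^{-1}(j)$''. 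Functoriality of $\mathbf{T}^{(-)}$ and the naturality axioms (\ref{2.1.5})--(\ref{2.1.7}) for $\alpha$ then transport this to the required identity of morphisms in $\mathcal{K}$.

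Next, the bump morphism $\mathrm{id}_{\Omega^{n}(M;\mathbb{E})}\otimes(k\to\mathcal{W}_{D})$ touches only the $\Omega^{n}$-factor, hence commutes with any operation on the $M\otimes\mathcal{W}_{D^{n+1}}$-factor; and since $j$ is carried into the first $n$ target coordinates, after the identification $M\otimes\mathcal{W}_{D^{n+1}}=(M\otimes\mathcal{W}_{D^{n}})\otimes\mathcal{W}_{D}$ scalar multiplication in that slot is precisely $\left(\underset{j}{\cdot}\right)_{M\otimes\mathcal{W}_{D^{n}}}^{\mathbb{R}}\otimes\mathrm{id}_{\mathcal{W}_{D}}$. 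Applying $\int_{M,\mathbb{E}}^{n}\otimes\mathrm{id}_{\mathcal{W}_{D}}$ and invoking Proposition \ref{t4.1}(1) tensored with $\mathrm{id}_{\mathcal{W}_{D}}$ yields that this equals $\left(\mathbb{\cdot}_{\mathbb{E}}^{\mathbb{R}}\otimes\mathrm{id}_{\mathcal{W}_{D}}\right)\circ\left(\mathrm{id}_{\mathbb{R}}\times(\int_{M,\mathbb{E}}^{n}\otimes\mathrm{id}_{\mathcal{W}_{D}})\right)$ on $\mathbb{E}\otimes\mathcal{W}_{D}=\mathbb{E}\times\mathbb{E}$. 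Finally I would push this through the second projection $\mathbb{E}\times\mathbb{E}\to\mathbb{E}$, noting that $\mathbb{\cdot}_{\mathbb{E}}^{\mathbb{R}}\otimes\mathrm{id}_{\mathcal{W}_{D}}$ corresponds under $\mathbb{E}\otimes\mathcal{W}_{D}=\mathbb{E}\times\mathbb{E}$ to acting by $\mathbb{\cdot}_{\mathbb{E}}^{\mathbb{R}}$ componentwise (by the same Euclidean-module bookkeeping used in Proposition \ref{t3.5}), so projecting to the second component gives exactly $\mathbb{\cdot}_{\mathbb{E}}^{\mathbb{R}}$ applied to the second component of $\int_{M,\mathbb{E}}^{n}\otimes\mathrm{id}_{\mathcal{W}_{D}}$, i.e. $\mathbb{\cdot}_{\mathbb{E}}^{\mathbb{R}}\circ(\mathrm{id}_{\mathbb{R}}\times(\int_{M,\mathbb{E}}^{n})_{i})$. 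Chaining the three commutations proves the lemma.

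The main obstacle I anticipate is purely bookkeeping rather than conceptual: making sure the index $j$ really does get transported by $(\partial_{i}^{n+1})$ into one of the first $n$ slots (so that Proposition \ref{t4.1}(1) applies to $\int_{M,\mathbb{E}}^{n}$ directly rather than to the dangling $\mathcal{W}_{D}$-factor), and verifying that the identification $M\otimes\mathcal{W}_{D^{n+1}}=(M\otimes\mathcal{W}_{D^{n}})\otimes\mathcal{W}_{D}$ used in the definition of $(\int_{M,\mathbb{E}}^{n})_{i}$ is compatible with the permutation $(\partial_{i}^{n+1})$ — i.e. that the trailing coordinate really is the $i$-th original one after reindexing. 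Once the combinatorics of the coordinates is pinned down, each commuting square is a routine application of functoriality of $\mathbf{T}^{(-)}$, the axioms on $\alpha$, and the behavior of $\otimes\,\mathrm{id}_{W}$ on Euclidean-module structures established in Lemma \ref{t3.3} and Proposition \ref{t3.4}; the details can safely be left to the reader.
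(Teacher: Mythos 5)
There is a genuine gap: your reduction covers only the slots that the cyclic permutation $\left(\partial_{i}^{n+1}\right)_{M\otimes\mathcal{W}_{D^{n+1}}}$ carries into the first $n$ coordinates, and it silently discards the one case that carries the real content of the lemma. The restriction of $j$ to $\{1,\dots,n\}$ is not justified: the subscript $M\otimes\mathcal{W}_{D^{n}}$ in the statement is a slip (the paper's own proof writes $\left(\underset{j}{\cdot}\right)_{M\otimes\mathcal{W}_{D^{n+1}}}^{\mathbb{R}}$), and the lemma must hold for every $j\in\{1,\dots,n+1\}$, because in Theorem \ref{t5.3} it is what verifies condition (1) of Proposition \ref{t4.1}, with $n$ replaced by $n+1$, for $\sum_{i}(-1)^{i+1}\left(\int_{M,\mathbb{E}}^{n}\right)_{i}$ before Theorem \ref{t4.2} is invoked; that condition demands linearity in all $n+1$ slots. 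Moreover, even inside your restricted range the case $j=i$ can occur, and there your key claim fails: $\left(\partial_{i}^{n+1}\right)_{M\otimes\mathcal{W}_{D^{n+1}}}$ sends the scaled slot into the trailing $\mathcal{W}_{D}$-factor, so Proposition \ref{t4.1}(1) tensored with $\mathrm{id}_{\mathcal{W}_{D}}$ says nothing about it.

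For $j<i$ and $j>i$ your argument is essentially the paper's (including the index shift $j\mapsto j-1$ for $j>i$, which you only gesture at via ``the slot to which $j$ is carried''). For $j=i$ the paper argues differently: one has $\left(\partial_{i}^{n+1}\right)\circ\left(\underset{i}{\cdot}\right)^{\mathbb{R}}=\left(\underset{n+1}{\cdot}\right)^{\mathbb{R}}\circ\left(\mathrm{id}_{\mathbb{R}}\times\left(\partial_{i}^{n+1}\right)\right)$, and scaling the trailing slot is precisely the scalar multiplication of Definition \ref{d2.3.1} on $\left(M\otimes\mathcal{W}_{D^{n}}\right)\otimes\mathcal{W}_{D}$; the morphism $\int_{M,\mathbb{E}}^{n}\otimes\mathrm{id}_{\mathcal{W}_{D}}$ intertwines this with the corresponding scalar multiplication on $\mathbb{E}\otimes\mathcal{W}_{D}$, which under the Euclidean identification $\mathbb{E}\otimes\mathcal{W}_{D}=\mathbb{E}\times\mathbb{E}$ of Proposition \ref{t3.5} fixes the first component and multiplies the second by the scalar; composing with the final second projection in the definition of $\left(\int_{M,\mathbb{E}}^{n}\right)_{i}$ then yields exactly $\cdot_{\mathbb{E}}^{\mathbb{R}}\circ\left(\mathrm{id}_{\mathbb{R}}\times\left(\int_{M,\mathbb{E}}^{n}\right)_{i}\right)$. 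This is the step where Euclideanness of $\mathbb{E}$ is genuinely used; without it your proof establishes the lemma only for $j\neq i$ and cannot feed Theorem \ref{t5.3}.
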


\begin{proof}
For $j<i$, it is easy to see that
\begin{align*}
&  \left(  \partial_{i}^{n+1}\right)  _{M\otimes\mathcal{W}_{D^{n+1}}}%
\circ\left(  \underset{j}{\cdot}\right)  _{M\otimes\mathcal{W}_{D^{n+1}}%
}^{\mathbb{R}}\\
&  =\left(  \left(  \underset{j}{\cdot}\right)  _{M\otimes\mathcal{W}_{D^{n}}%
}^{\mathbb{R}}\otimes\mathrm{id}_{W_{D}}\right)  \circ\left(  \mathrm{id}%
_{\mathbb{R}}\times\left(  \partial_{i}^{n+1}\right)  _{M\otimes
\mathcal{W}_{D^{n+1}}}\right)
\end{align*}
while, for $j>i$, it is also easy to see that
\begin{align*}
&  \left(  \partial_{i}^{n+1}\right)  _{M\otimes\mathcal{W}_{D^{n+1}}}%
\circ\left(  \underset{j}{\cdot}\right)  _{M\otimes\mathcal{W}_{D^{n+1}}%
}^{\mathbb{R}}\\
&  =\left(  \left(  \underset{j-1}{\cdot}\right)  _{M\otimes\mathcal{W}%
_{D^{n}}}^{\mathbb{R}}\otimes\mathrm{id}_{W_{D}}\right)  \circ\left(
\mathrm{id}_{\mathbb{R}}\times\left(  \partial_{i}^{n+1}\right)
_{M\otimes\mathcal{W}_{D^{n+1}}}\right)
\end{align*}
Therefore, for $j\neq i$, that
\begin{align*}
&  \left(  \int_{M,\mathbb{E}}^{n}\right)  _{i}\circ\left(  \left(
\underset{j}{\cdot}\right)  _{M\otimes\mathcal{W}_{D^{n+1}}}^{\mathbb{R}%
}\times\mathrm{id}_{\Omega^{n}(M;\mathbb{E})}\right) \\
&  =\left(  \mathbb{\cdot}_{\mathbb{E}}^{\mathbb{R}}\right)  \circ\left(
\mathrm{id}_{\mathbb{R}}\times\left(  \int_{M,\mathbb{E}}^{n}\right)
_{i}\right)
\end{align*}
follows directly. It remains to show that
\begin{align*}
&  \left(  \int_{M,\mathbb{E}}^{n}\right)  _{i}\circ\left(  \left(
\underset{i}{\cdot}\right)  _{M\otimes\mathcal{W}_{D^{n+1}}}^{\mathbb{R}%
}\times\mathrm{id}_{\Omega^{n}(M;\mathbb{E})}\right) \\
&  =\left(  \mathbb{\cdot}_{\mathbb{E}}^{\mathbb{R}}\right)  \circ\left(
\mathrm{id}_{\mathbb{R}}\times\left(  \int_{M,\mathbb{E}}^{n}\right)
_{i}\right)
\end{align*}
which follows readily from
\begin{align*}
&  \left(  \partial_{i}^{n+1}\right)  _{M\otimes\mathcal{W}_{D^{n+1}}}%
\circ\left(  \underset{i}{\cdot}\right)  _{M\otimes\mathcal{W}_{D^{n+1}}%
}^{\mathbb{R}}\\
&  =\left(  \underset{n+1}{\cdot}\right)  _{M\otimes\mathcal{W}_{D^{n+1}}%
}^{\mathbb{R}}\circ\left(  \partial_{i}^{n+1}\right)  _{M\otimes
\mathcal{W}_{D^{n+1}}}%
\end{align*}

\end{proof}

\begin{lemma}
\label{t5.2}Given a permutation $\sigma$ of $\left\{  1,...,n+1\right\}  $, we
have
\begin{align*}
&  \left(  \sum_{i=1}^{n+1}(-1)^{i+1}\left(  \int_{M,\mathbb{E}}^{n}\right)
_{i}\right)  \circ\left(  \left(  \cdot^{\sigma}\right)  _{M\otimes
\mathcal{W}_{D^{n+1}}}\times\mathrm{id}_{\Omega^{n}(M;\mathbb{E})}\right) \\
&  =\varepsilon_{\sigma}\sum_{i=1}^{n+1}(-1)^{i+1}\left(  \int_{M,\mathbb{E}%
}^{n}\right)  _{i}%
\end{align*}

\end{lemma}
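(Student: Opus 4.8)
\textbf{Proof plan for Lemma \ref{t5.2}.}

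The plan is to reduce the identity for a general permutation $\sigma$ of $\left\{1,\dots,n+1\right\}$ to the case of adjacent transpositions $\sigma=(k\ k+1)$, since these generate the symmetric group and the signature is multiplicative. More precisely, write $\sigma$ as a product of adjacent transpositions; if the claimed identity holds whenever $\sigma$ is replaced by an adjacent transposition, then composing (using $(\cdot^{\sigma\tau})_{M\otimes\mathcal{W}_{D^{n+1}}}=(\cdot^{\sigma})_{M\otimes\mathcal{W}_{D^{n+1}}}\circ(\cdot^{\tau})_{M\otimes\mathcal{W}_{D^{n+1}}}$, which follows from functoriality of $\mathcal{W}$ and the contravariant-functoriality axiom (\ref{2.1.5})) gives the general case with the correct sign $\varepsilon_{\sigma}$. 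So first I would record this reduction and then fix $\sigma=(k\ k+1)$ for the remainder.

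For an adjacent transposition $\sigma=(k\ k+1)$, the key computation is to understand how $\left(\cdot^{\sigma}\right)_{M\otimes\mathcal{W}_{D^{n+1}}}$ interacts with each face-type operator $\left(\partial_i^{n+1}\right)_{M\otimes\mathcal{W}_{D^{n+1}}}$, which by definition (via the Notation) is induced by the cyclic relocation $(d_1,\dots,d_{n+1})\mapsto(d_1,\dots,d_{i-1},d_{i+1},\dots,d_{n+1},d_i)$ of coordinates. At the level of the imaginary objects, $\left(\int_{M,\mathbb{E}}^n\right)_i$ is, up to the coordinate embedding $k\to\mathcal{W}_D$ and the projection $\mathbb{E}\otimes\mathcal{W}_D=\mathbb{E}\otimes\mathbb{E}\to\mathbb{E}$, just $\int_{M,\mathbb{E}}^n$ precomposed with $\left(\partial_i^{n+1}\right)_{M\otimes\mathcal{W}_{D^{n+1}}}$ in the first factor. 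Thus I would compute $\left(\partial_i^{n+1}\right)_{M\otimes\mathcal{W}_{D^{n+1}}}\circ\left(\cdot^{\sigma}\right)_{M\otimes\mathcal{W}_{D^{n+1}}}$ combinatorially, splitting into the three cases $i\notin\{k,k+1\}$, $i=k$, and $i=k+1$. In the first case the two operations essentially commute (the face operator and the transposition act on disjoint or harmlessly overlapping coordinate blocks), so one gets $\left(\int_{M,\mathbb{E}}^n\right)_i$ back, possibly after a further permutation $\sigma'$ of $\{1,\dots,n\}$ inside $M\otimes\mathcal{W}_{D^n}$ whose signature equals $\varepsilon_\sigma=-1$; invoking Proposition \ref{t4.1}(2) converts that inner permutation into the scalar $\varepsilon_\sigma$ on $\mathbb{E}$. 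In the cases $i=k$ and $i=k+1$, applying the transposition and then deleting-and-appending the $i$-th coordinate produces, after the same kind of coordinate bookkeeping, exactly $\left(\int_{M,\mathbb{E}}^n\right)_{k+1}$ respectively $\left(\int_{M,\mathbb{E}}^n\right)_{k}$, again up to an inner permutation of signature $-1$; so the two terms $i=k$ and $i=k+1$ in the alternating sum swap, their indices differing by one, which flips the sign $(-1)^{i+1}$, and together with the $-1$ from the inner permutation the net contribution of these two terms is $(-1)$ times the sum of the same two terms. Combining all cases gives the factor $\varepsilon_\sigma=-1$ uniformly across the whole alternating sum.

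The main obstacle will be the bookkeeping in the two cases $i\in\{k,k+1\}$: one must check carefully that after composing the adjacent transposition with the cyclic "delete-$i$-and-append" map the resulting permutation of $D^{n+1}$ factors as $\left(\partial_{i'}^{n+1}\right)_{M\otimes\mathcal{W}_{D^{n+1}}}$ for the correct $i'\in\{k,k+1\}$ followed by an inner permutation of $\{1,\dots,n\}$, and that the signature of that inner permutation is precisely $-1$, so that the two effects — reindexing $i\leftrightarrow i'$ (hence $(-1)^{i+1}\leftrightarrow(-1)^{i'+1}$) and the inner sign — combine to $\varepsilon_\sigma$ and not to $+1$. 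All of these reductions are purely combinatorial identities among morphisms of the form $\mathrm{id}_M\otimes\mathcal{W}_{(\cdots)}$, together with one application of Proposition \ref{t4.1}(2) to absorb inner permutations into scalar multiplications on $\mathbb{E}$; the routine verifications can be left to the reader once the case analysis is laid out.
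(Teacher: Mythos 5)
Your reduction to adjacent transpositions (using $\left(\cdot^{\sigma\tau}\right)_{M\otimes\mathcal{W}_{D^{n+1}}}=\left(\cdot^{\sigma}\right)_{M\otimes\mathcal{W}_{D^{n+1}}}\circ\left(\cdot^{\tau}\right)_{M\otimes\mathcal{W}_{D^{n+1}}}$ up to order, plus multiplicativity of the signature) is legitimate, and your first case $i\notin\left\{k,k+1\right\}$ is right: the face index is unchanged and the induced inner permutation of $\left\{1,\dots,n\right\}$ is odd, so Proposition \ref{t4.1}(2) supplies the factor $-1$. The genuine error is in the two remaining cases $i=k$ and $i=k+1$: there the induced inner permutation is \emph{even}, not of signature $-1$. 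For example, take $n=1$ and $\sigma=(1\ 2)$: then $\left(\partial_{2}^{2}\right)_{M\otimes\mathcal{W}_{D^{2}}}$ is the identity and $\left(\partial_{1}^{2}\right)_{M\otimes\mathcal{W}_{D^{2}}}=\left(\cdot^{\sigma}\right)_{M\otimes\mathcal{W}_{D^{2}}}$, so $\left(\int_{M,\mathbb{E}}^{1}\right)_{1}\circ\left(\left(\cdot^{\sigma}\right)_{M\otimes\mathcal{W}_{D^{2}}}\times\mathrm{id}\right)=\left(\int_{M,\mathbb{E}}^{1}\right)_{2}$ and symmetrically, with no extra sign at all; the whole factor $\varepsilon_{\sigma}=-1$ for these two terms comes solely from the interchange of indices $k\leftrightarrow k+1$ against the alternating sign $(-1)^{i+1}$. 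Your own bookkeeping is in fact internally inconsistent: an index flip (factor $-1$) combined with your claimed inner sign $-1$ yields $+1$, not the asserted $-1$, and with those signs the lemma would be false. The correct signs are exactly what the paper's formula $\varepsilon_{\delta_{i}^{\sigma}}=(-1)^{\sigma^{-1}(i)-i}\varepsilon_{\sigma}$ gives: $+1$ when $i\in\left\{k,k+1\right\}$ and $-1$ otherwise.

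For comparison, the paper does not reduce to transpositions at all: for arbitrary $\sigma$ it shows that $\left(\partial_{i}^{n+1}\right)_{M\otimes\mathcal{W}_{D^{n+1}}}\circ\left(\cdot^{\sigma}\right)_{M\otimes\mathcal{W}_{D^{n+1}}}$ factors as the face operator at index $\sigma^{-1}(i)$ followed by an induced permutation $\delta_{i}^{\sigma}$ acting on the inner $D^{n}$ factor, computes $\varepsilon_{\delta_{i}^{\sigma}}=(-1)^{\sigma^{-1}(i)-i}\varepsilon_{\sigma}$, absorbs $\delta_{i}^{\sigma}$ by Proposition \ref{t4.1}(2), and reindexes the alternating sum by $i\mapsto\sigma^{-1}(i)$, so that $(-1)^{i+1}\varepsilon_{\delta_{i}^{\sigma}}=(-1)^{\sigma^{-1}(i)+1}\varepsilon_{\sigma}$ term by term. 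Your route could be repaired by redoing the cases $i\in\left\{k,k+1\right\}$ with the correct even inner permutation (so that the sign comes only from the index swap), but as written the central sign computation is wrong.
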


\begin{proof}
We notice that
\begin{align*}
&  \left(  \partial_{i}^{n+1}\right)  _{M\otimes\mathcal{W}_{D^{n+1}}}%
\circ\left(  \cdot^{\sigma}\right)  _{M\otimes\mathcal{W}_{D^{n+1}}}\\
&  =\left(  \mathrm{id}_{M}\otimes W_{\left(  \delta_{i}^{\sigma}\right)
_{D^{n}}}\right)  \left(  \cdot^{\delta_{i}^{\sigma}}\right)  _{M\otimes
\mathcal{W}_{D^{n+1}}}\circ\left(  \partial_{\sigma^{-1}\left(  i\right)
}^{n+1}\right)  _{M\otimes\mathcal{W}_{D^{n+1}}}%
\end{align*}
where $\delta_{i}^{\sigma}$ is the permutation of $\left\{  1,...,n\right\}  $
with
\begin{align*}
\delta_{i}^{\sigma}\left(  j\right)   & =\sigma\left(  j\right)  \text{ in
case of }j<\sigma^{-1}(i)\text{ and }\sigma\left(  j\right)  <i\text{;}\\
\delta_{i}^{\sigma}\left(  j\right)   & =\sigma\left(  j+1\right)  \text{ in
case of }j\geqq\sigma^{-1}(i)\text{ and }\sigma\left(  j\right)  <i\text{;}\\
\delta_{i}^{\sigma}\left(  j\right)   & =\sigma\left(  j\right)  -1\text{ in
case of }j<\sigma^{-1}(i)\text{ and }\sigma\left(  j\right)  \geqq i\text{;}\\
\delta_{i}^{\sigma}\left(  j\right)   & =\sigma\left(  j+1\right)  -1\text{ in
case of }j\geqq\sigma^{-1}(i)\text{ and }\sigma\left(  j\right)  \geqq
i\text{.}%
\end{align*}
We notice also that
\begin{align*}
&  \left(  \int_{M,\mathbb{E}}^{n}\right)  \circ\left(  \left(  \cdot
^{\delta_{i}^{\sigma}}\right)  _{M\otimes\mathcal{W}_{D^{n}}}\times
\mathrm{id}_{\Omega^{n}(M;\mathbb{E})}\right) \\
&  =\varepsilon_{\delta_{i}^{\sigma}}\int_{M,\mathbb{E}}^{n}%
\end{align*}
and
\[
\varepsilon_{\delta_{i}^{\sigma}}=(-1)^{\sigma^{-1}(i)-i}\varepsilon_{\sigma}%
\]
Therefore the desired statement follows.
\end{proof}

\begin{theorem}
\label{t5.3}(\underline{The Fundamental Theorem on Exterior Differentiation})
There exists a unqiue morphism
\[
\mathbf{d}_{n}:\Omega^{n}(M;\mathbb{E})\rightarrow\Omega^{n+1}(M;\mathbb{E})
\]
in $\mathcal{K}$ such that the the composition of morphisms
\[
\mathrm{id}_{M\otimes\mathcal{W}_{D^{n+1}}}\times\mathbf{d}_{n}:\left(
M\otimes\mathcal{W}_{D^{n+1}}\right)  \times\Omega^{n}(M;\mathbb{E}%
)\rightarrow\left(  M\otimes\mathcal{W}_{D^{n+1}}\right)  \times\Omega
^{n+1}(M;\mathbb{E})
\]
and
\[
\int_{M,\mathbb{E}}^{n+1}:\left(  M\otimes\mathcal{W}_{D^{n+1}}\right)
\times\Omega^{n+1}(M;\mathbb{E})\rightarrow\mathbb{E}%
\]
is equal to the morphism
\[
\sum_{i=1}^{n+1}(-1)^{i+1}\left(  \int_{M,\mathbb{E}}^{n}\right)  _{i}:\left(
M\otimes\mathcal{W}_{D^{n+1}}\right)  \times\Omega^{n}(M;\mathbb{E}%
)\rightarrow\mathbb{E}%
\]

\end{theorem}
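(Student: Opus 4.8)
The plan is to reduce Theorem \ref{t5.3} to the Fundamental Theorem on Differential Forms (Theorem \ref{t4.2}) by taking $X=\Omega^{n}(M;\mathbb{E})$ and $\varphi=\sum_{i=1}^{n+1}(-1)^{i+1}\left(\int_{M,\mathbb{E}}^{n}\right)_{i}$, viewed as a morphism
\[
\varphi:\left(M\otimes\mathcal{W}_{D^{n+1}}\right)\times\Omega^{n}(M;\mathbb{E})\rightarrow\mathbb{E}=\mathrm{hom}\left(\mathbb{E}\otimes\mathcal{W}_{D^{n+1}}\right)\subseteq\mathbb{E}\otimes\mathcal{W}_{D^{n+1}}.
\]
Once I verify that this $\varphi$ satisfies the two hypotheses of Theorem \ref{t4.2} (with $n$ there replaced by $n+1$), that theorem delivers a unique $\widehat{\varphi}:\Omega^{n}(M;\mathbb{E})\rightarrow\Omega^{n+1}(M;\mathbb{E})$ with the stated factorization property, and I simply set $\mathbf{d}_{n}=\widehat{\varphi}$. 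Uniqueness in Theorem \ref{t5.3} is then exactly the uniqueness clause of Theorem \ref{t4.2}.

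So the real content is checking the two conditions of Proposition \ref{t4.1} for $\varphi$ in place of $\int_{M,\mathbb{E}}^{n+1}$. The first condition — compatibility with the scalar action $\left(\underset{j}{\cdot}\right)_{M\otimes\mathcal{W}_{D^{n+1}}}^{\mathbb{R}}$ for each $j=1,\dots,n+1$ — is precisely Lemma \ref{t5.1}, applied termwise: each $\left(\int_{M,\mathbb{E}}^{n}\right)_{i}$ intertwines the $j$-th scalar multiplication with scalar multiplication on $\mathbb{E}$, so the signed sum does too, since $\mathbb{\cdot}_{\mathbb{E}}^{\mathbb{R}}$ is $\mathbb{R}$-linear in $\mathbb{E}$ and commutes with the linear combination $\sum(-1)^{i+1}$. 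The second condition — equivariance under permutations $\sigma$ of $\{1,\dots,n+1\}$ with the sign $\varepsilon_{\sigma}$ — is exactly Lemma \ref{t5.2}. Thus both hypotheses of Theorem \ref{t4.2} hold, and the proof is complete.

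The one point that deserves care, and which I expect to be the main (though modest) obstacle, is the bookkeeping identifying the codomain $\mathbb{E}$ of $\varphi$ with $\mathrm{hom}\left(\mathbb{E}\otimes\mathcal{W}_{D^{n+1}}\right)$ so that Theorem \ref{t4.2} applies literally: Theorem \ref{t4.2} is stated for morphisms into $\mathbb{E}\otimes\mathcal{W}_{D^{n+1}}$, whereas $\int_{M,\mathbb{E}}^{n+1}$ and each $\left(\int_{M,\mathbb{E}}^{n}\right)_{i}$ land in $\mathbb{E}$. But since $\mathbb{E}$ is Euclidean, $\mathbb{E}$ is canonically the homogeneous subobject of $\mathbb{E}\otimes\mathcal{W}_{D^{n+1}}$ (as noted in the Remark following the definition of infinitesimal integration), and both conditions automatically force the relevant morphisms to factor through that subobject; so the identification is harmless. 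After recording this identification, everything else is a direct citation of Lemmas \ref{t5.1} and \ref{t5.2} together with Theorem \ref{t4.2}, and the routine details — in particular checking that the signed sum of morphisms is well defined using the $\mathbb{R}$-module structure on $\mathbb{E}$ guaranteed by its Euclideaness — can safely be left to the reader.
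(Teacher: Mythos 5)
Your proposal is correct and follows exactly the paper's own route: the paper proves Theorem \ref{t5.3} by citing Lemmas \ref{t5.1} and \ref{t5.2} to verify the two hypotheses of Theorem \ref{t4.2} (with $n$ replaced by $n+1$ and $X=\Omega^{n}(M;\mathbb{E})$) and then taking $\mathbf{d}_{n}=\widehat{\varphi}$, which is precisely your argument. Your added remark about identifying $\mathbb{E}$ with the homogeneous subobject of $\mathbb{E}\otimes\mathcal{W}_{D^{n+1}}$ is a reasonable clarification of a notational slip in the statement of Theorem \ref{t4.2} and does not constitute a departure from the paper's proof.
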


\begin{proof}
This follows easily from Lemmas \ref{t5.1} and \ref{t5.2} and Theorem
\ref{t4.2}.
\end{proof}

\end{document}